\newcommand{\eqdef}{\stackrel{\scriptscriptstyle\rm def}{=}}
\newtheorem{theorem}{Theorem}
\newtheorem{lemma}{Lemma}
\newtheorem{example}{Example}
\renewcommand{\epsilon}{\theta}
\newcommand{\cM}{\EuScript{M}}
\newcommand{\cP}{\EuScript{P}}
\newcommand{\bR}{{\mathbb R}}
\newcommand{\bC}{{\mathbb C}}
\newcommand{\bZ}{{\mathbb Z}}
\def\diam{\text{\rm {diam}}}
\def\det{\text{{\rm det}}}  
\def\Crit{{\mathcal S}}
\def\hf{{\widehat f}}\def\hP{{\widehat P}}\def\hmu{{\widehat\mu}}\def\hLambda{{\widehat \Lambda}}
\def\hN{{\widehat N}}\def\hx{{\widehat x}}\def\hy{{\widehat y}}
\def\card{{\text{card}\,}}
\renewcommand{\emptyset}{\varnothing}
\title[Repellers for non-uniformly expanding maps]{Repellers for non-uniformly expanding maps with singular or critical points}
\author{Katrin Gelfert}\address{Instituto de Matem\'atica, UFRJ, Cidade Universit\'aria - Ilha do Fund\~ao, Rio de Janeiro 21945-909, Brazil}\email{katrin.gelfert@googlemail.com}
\begin{document}
\thanks{I am grateful to Feliks Przytycki and Vilton Pinheiro for sharing their insights and acknowledge the hospitality of IMPA, Rio de Janeiro, where part of this research was done. I was supported by EU FP6 ToK SPADE2 and by the Alexander von Humboldt Foundation.}
\begin{abstract}
Given an ergodic measure with positive entropy and only positive Lyapunov exponents, its dynamical quantifiers can be approximated by means of quantifiers of some family of uniformly expanding repellers. Here non-uniformly expanding maps are studied that are $C^{1+\beta}$ smooth outside a set of possibly  critical  or singular points. 
\end{abstract}
\keywords{Pesin theory, non-uniformly hyperbolic dynamics, horseshoes, entropy}
\subjclass[2000]{37D40, 37D50, 37C35}
\maketitle

\section{Introduction}

By a well-established technique, a $C^{1+\beta}$ diffeomorphism that preserves a hyperbolic ergodic measure of positive entropy can be approximated  gradually by compact invariant locally maximal hyperbolic sets -- horseshoes. Here approximation is to be understood in terms of dynamical quantities such as the topological entropy,  the topological pressure of a continuous function, Lyapunov exponents and averages of continuous functions with respect to ergodic measure that are supported on the horseshoes. In this paper we prove an analogous version in the case of a map that  possesses some ergodic measure with positive entropy and only positive Lyapunov exponents and we show a gradual approximation by uniformly expanding repellers. 
We are interested in a quite general class of maps that are $C^{1+\beta}$ smooth outside some set $\Crit$ that can contain critical and singular points of a  certain type or points where  $f$ is discontinuous. 

Let $f\colon M\to M$ be a map on a  compact $n$-dimensional Riemannian mani\-fold $M$. Let $\Crit\subset M$ be a set that may be thought of the set of points $x\in M$ where  $df(x)$ is either not defined or  where $df(x)$ is not invertible. Assume that $f\colon M\setminus \Crit\to f(M\setminus\Crit)$ be a $C^{1+\beta}$ map. We require that $f$  satisfies additional conditions ($C_1$) and ($C_2$) that will be 
specified below. 
Recall that $R$ is a \emph{uniformly expanding repeller} with respect to $f$ if $R$ is a compact $f$-invariant isolated set such that $f|_R$ is uniformly expanding and topologically transitive. Recall that $f|_R$ is said to be \emph{uniformly expanding} if there exist $c>0$ and $\lambda>1$ such that for every $n\ge 1$ and every $x\in R$ we have $\lvert (f^n)'\rvert\ge c \lambda^n$.
Recall that $R$ is said to be \emph{isolated} if there exists an open neighborhood $U\subset M$ of $R$ such that $f^n(x)\in U$ for every $n\ge 0$ implies $x\in R$.

The following is the first main result of this paper. 

\begin{theorem}\label{theorem}
	Let $f\colon M\setminus \Crit\to f(M\setminus\Crit)$ be a $C^{1+\beta}$ map and   $\mu$ be an $f$-invariant ergodic Borel probability measure satisfying ($C_1$) and ($C_2$). 
	Assume that $\mu$ has positive entropy and that it admits  only positive finite Lyapunov exponents that are bounded from below by some number $\chi(\mu)>0$. 
	Let $\varphi= \varphi_1$, $\ldots$, $\varphi_K\colon M\to\bR$ be continuous functions.
	
	 For every $\varepsilon>0$ there exists a compact $f$-invariant  set $Q_\varepsilon\subset M\setminus \Crit$ such that $f|_{Q_\varepsilon}$ is uniformly expanding and  satisfies
	 \begin{enumerate}
	 \item[(a)] $\displaystyle h_{\rm top}(f|_{Q_\varepsilon})\ge  h_\mu(f)-\varepsilon$,\\[-0.4cm]
	 \item[(b)] for  every $j=1$, $\ldots$, $K$ 
	 	\[
	 	P_{\rm top}(f|_{Q_\varepsilon},\varphi_j)\ge h_\mu(f)+\int\varphi_j\,d\mu-\varepsilon,
		\]	
	\item[(c)] for every $x\in Q_\varepsilon$ and every $j=1$, $\ldots$, $K$
		\[
		\limsup_{n\to\infty}\left\lvert \frac{1}{n}\left(\varphi_j(x))+\varphi_j(f(x))+\cdots+\varphi_j(f^{n-1}(x))\right)-\int\varphi_j\,d\mu\right\rvert<\varepsilon,\]
	\item[(d)] Lyapunov exponents of periodic points satisfy
		\[ 
		\inf\big\{\lambda(x,v)
			\colon x=f^{n(x)}(x)\in Q_\varepsilon \text{ and }
			v\in T_xM\setminus \{0\}\big\} \ge \chi(\mu)-\varepsilon.
		\]
	\end{enumerate}
	Moreover, there exists $m\ge 1$ such that $f^m|_{Q_\varepsilon}$ is a uniformly expanding repeller.
\end{theorem}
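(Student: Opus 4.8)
For the final assertion, the plan is as follows. By what has already been proved $Q_\varepsilon$ is compact, $f$-invariant and uniformly expanding, and since $\lvert (f^k)'\rvert\ge c\lambda^k$ on $Q_\varepsilon$ yields $\lvert ((f^m)^k)'\rvert=\lvert (f^{mk})'\rvert\ge c(\lambda^m)^k$ with $\lambda^m>1$, every power $f^m|_{Q_\varepsilon}$ is again uniformly expanding. Hence, after possibly replacing $Q_\varepsilon$ by a smaller compact $f$-invariant set still satisfying (a)--(d), it remains only to show that $Q_\varepsilon$ is isolated and that $f^m|_{Q_\varepsilon}$ is topologically transitive for some $m\ge1$. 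Both are to be read off from the combinatorial skeleton the construction produces: $Q_\varepsilon$ is covered by finitely many ``cylinders'' $P_1,\dots,P_N$, each the image of a fixed small reference ball under a composition of inverse branches of $f$ along an orbit segment shadowing $\mu$, with pairwise disjoint closures and obeying a Markov relation, so that the itinerary map conjugates $f^N|_{Q_\varepsilon}$ -- for the iterate $N$ built into the construction, possibly $N=1$ -- to a subshift of finite type $(\Sigma,\sigma)$.

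First, isolatedness. Fix $\delta>0$ below the mutual distances of the compacta $\overline{P_1},\dots,\overline{P_N}$ and small enough that all inverse branches of $f$ entering the construction are defined and uniformly contracting on balls of radius $\delta$ -- legitimate because $f|_{Q_\varepsilon}$ is uniformly expanding and $Q_\varepsilon$ keeps a definite distance from $\Crit$ -- and let $U$ be the $\delta/3$-neighborhood of $Q_\varepsilon$. If $f^k(x)\in U$ for all $k\ge0$, then for each $k$ exactly one cylinder $P_{i_k}$ meets $B(f^k(x),\delta/3)$, the word $(i_k)_{k\ge0}$ is $\sigma$-admissible by the Markov relation, and the contraction estimate forces $x$ to coincide with the unique point of $Q_\varepsilon$ carrying that itinerary. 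So $Q_\varepsilon$ is isolated, with $U$ an isolating neighborhood that also works for every iterate.

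Next, transitivity. In the typical execution of the construction the cylinders are arranged so that each one maps over every other, i.e.\ $(\Sigma,\sigma)$ is a full shift; then $f^N|_{Q_\varepsilon}$ is already topologically transitive and $m=N$ does the job. If the coding is only a general subshift of finite type, I would instead invoke its spectral decomposition and pass to the closed invariant subset coded by a transitive component of \emph{maximal topological entropy}, taken as the new $Q_\varepsilon$. Property (a) survives, because the entropy of a subshift is the maximum of the entropies of its transitive components; (c) and (d), being pointwise statements about orbits and periodic orbits, descend to the subset; and (b) need not be reimposed, since for the measure of maximal entropy $\nu$ of the new set
\[
P_{\rm top}(f|_{Q_\varepsilon},\varphi_j)\ \ge\ h_\nu(f)+\int\varphi_j\,d\nu\ \ge\ \bigl(h_\mu(f)-\varepsilon\bigr)+\Bigl(\int\varphi_j\,d\mu-\varepsilon\Bigr),
\]
the bound on $\int\varphi_j\,d\nu$ coming from (c) and Birkhoff's theorem for the ergodic measure $\nu$; it therefore suffices to have run the construction with $\varepsilon/2$ in place of $\varepsilon$. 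In both cases $f^m|_{Q_\varepsilon}$ is compact, $f^m$-invariant, isolated, uniformly expanding and topologically transitive, i.e.\ a uniformly expanding repeller.

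The step I expect to be the genuine obstacle is exactly that last reduction: a transitive component maximizing $h_{\rm top}$, or $P_{\rm top}(\cdot,\varphi_1)$, need not be optimal for the remaining $\varphi_j$, so one cannot in general maximize every quantity on a single component. The resolution, used above, is that (b) is not an independent requirement -- it follows from (a) and (c), each of which is inherited by subsystems -- so the single reduction to the component of maximal entropy is both harmless and sufficient. Matching the period of the transitive subshift against the word lengths built into the cylinders, needed only to pin down a clean value of $m$, is then routine.
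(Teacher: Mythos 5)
The central gap is that you prove only the final ``Moreover'' clause and take everything else as given: the existence of a compact $f$-invariant uniformly expanding set $Q_\varepsilon$ satisfying (a)--(d) is the actual content of the theorem, and it is precisely what the paper's entire proof (together with all of Section~2) is devoted to establishing. Nothing in your proposal constructs $Q_\varepsilon$: there is no choice of an $(n,\widetilde\varepsilon)$-separated set in a Pesin block realizing $h_\mu(f)$ in the manner of Katok, no control of inverse branches near $\Crit$ via the integrability of $\log d(\cdot,\Crit)$, the slow-recurrence lemma and the tempering kernel lemma, and no pigeonhole argument producing a common return time $m$ to a common reference ball. Without these, the ``combinatorial skeleton'' of cylinders with pairwise disjoint closures obeying a Markov relation, on which your whole argument leans, is an assumption rather than a consequence.

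For the part you do address, your route is correct but detours where the construction makes a detour unnecessary. In the paper all the branch points $x\in F_m\cap\cP(x_i)$ return to the \emph{same} ball $B(x_i,\rho/2)$ after the \emph{same} time $m$, so each inverse branch $f^{-m}_x$ maps the whole reference ball into a small set, these images have pairwise disjoint closures, and each is mapped by $f^m$ over the entire reference ball; hence $f^m|_{R_\varepsilon}$ is conjugate to the full one-sided shift on $\card F_m$ symbols, and transitivity and isolation are immediate --- no spectral decomposition or selection of a transitive component is ever needed. Your fallback for a general subshift of finite type is sound, and your observation that (b) can be recovered from (a) and (c) via the variational principle (after running the construction with $\varepsilon/2$) is essentially how the paper itself derives (b), using the measure of maximal entropy of $f^m|_{R_\varepsilon}$ together with $mP_{\rm top}(f|_{Q_\varepsilon},\varphi_j)=P_{\rm top}(f^m|_{Q_\varepsilon},S_m\varphi_j)$. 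Your isolating-neighborhood argument via itineraries is also fine. But these are the easy ten percent; the theorem is proved only once the separated set, the uniform contraction of inverse branches on a ball of uniform radius $\rho$, and the common return time $m$ have been produced.
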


Results of this type are widely referred to Katok~\cite{Kat:80} or Katok and Mendoza, see~\cite{KatHas:95}. An earlier related statement for continuous and for piecewise monotone maps  of the interval goes back to Misiurewicz and Szlenk~\cite{MisSzl:80}. Corresponding properties of holomorphic maps are shown in~\cite{PrzUrb:}. The case of $C^{1+\beta}$ maps is covered in~\cite{Chu:99}, see also~\cite{Buz:} for a sketch. A related setting of dyadic diophantine approximations is established in~\cite{PerSch:}. 
Following similar ideas, Mendoza~\cite{Men:88} and S\'anchez-Salas~\cite{San:02} investigate how  hyperbolic SRB-measures can be approximated by ergodic measures that are supported on horseshoes of arbitrarily large unstable dimension. Similar results in the case of holomorphic functions are derived by Przytycki in~\cite{Prz:05} and in~\cite[Chapter 11]{PrzUrb:}.

We now formulate and discuss our assumptions ($C_1$) and ($C_2$).
First, we assume that $f$ preserves an $f$-invariant ergodic Bolel probability measure $\mu$ satisfying 
\begin{itemize}
\item[($C_1$)]\hspace{2cm}
$\displaystyle \log^+\lVert df\rVert\in L^1(\mu)$,
$\displaystyle \log^+\lVert (df)^{-1}\rVert\in L^1(\mu)$ .
\end{itemize}
Throughout we use the notation $\log^+a=\max\{\log a,0\}$. 
We want to include into our analysis maps that are H\"older continuously differentiable  outside $\Crit$, but may have unbounded derivatives.   
In the presence of such singularities (uniform) H\"olderness of the derivative may be lost, 
and similar arguments apply to local inverses of the function $f$. To have some control on the H\"olderness close to singular points,  we require the following hypothesis to be satisfied:
\begin{itemize}
\vspace{0.1cm}
\item[($C_2$)] 
	There are functions $G$, $H$ such that $\log G$, $\log H\in L^1(\mu)$ and that 
	for every $x$, $y\in M\setminus \Crit$ with $d(f(x),f(y))<G(f(x))$ and $v\in T_{f(x)}M$ we have
	\begin{equation}\label{uh1}
	 	\left \lVert df(x)^{-1}(v)  -  df(y)^{-1}(v)\right\rVert 
		\le  H(f(x)) \, d(f(x),f(y))^{\beta}\lVert v\rVert.
	\end{equation} 
	Moreover 
	\begin{equation}\label{uh22}
		\log d(\cdot,\Crit)\in L^1(\mu).
	\end{equation} 
\end{itemize}
Here $d(x,\Crit)$ denotes the Riemannian distance of a point $x$ from $\Crit$. 
The latter condition~\eqref{uh22} is required to have under these general requirements control on the asymptotic behavior of orbits that eventually approach singularities or critical points. In fact, as it can be seen below, it is sufficient to require that $f$ has slow return to critical points in the sense of Lemma~\ref{lem:chu}. 

We phrase the general condition $(C_2)$, in order to put it into the context of other commonly used approaches and mainly follow  an approach in~\cite{KatStr:86}. The main difference is that in~\cite{KatStr:86} they assume the map $f$ to be $C^2$ diffeomorphism from $M\setminus \Crit$ onto its image $f(M\setminus \Crit)$, and they assume some control on the second derivative of $f$. In our setting $f$ need not be invertible, and we require $f$ to be $C^{1+\beta}$ smooth outside $\Crit$.  
Similar approaches abstracting from one-dimensional maps~\cite{BenMis:89} and maps with singularities~\cite{KatStr:86} to higher-dimensional maps require that $f$ is a $C^{1+\beta}$ local diffeomorphism outside a set $\Crit$ and is \emph{non-flat}, that is, behaves like a power of the distance close to the singular set $\Crit$ and in addition shows  slow recurrence to $\Crit$. 
In comparison to that we will not require any more particular properties of $f$ close to $\Crit$. In particular, Theorem~\ref{theorem} is also applicable to $C^{1+\beta}$ maps with flat critical points.

Notice that for~\eqref{uh1}  it is sufficient to assume that $df$ is H\"older continuous with some control on the H\"older constant. 
Then we can use a special version of the inverse mapping theorem for maps with H\"older continuous derivatives (see for example~\cite[Lemma~4.1.3]{BarPes:02}) to verify an inverse branch $y\mapsto f^{-1}(y)$ to be of class $C^{1+\beta}$. 

Let us now discuss some special cases and examples that fit into our settings. First let us consider a particular case of a map with non-flat critical/singular points that
show a certain non-degeneracy as for example in~\cite{AlvBonVia:00}.

\begin{theorem}\label{theorem2}
	Let $f\colon M\setminus \Crit\to f(M\setminus\Crit)$ be a $C^2$ local diffeomorphism and assume that there are constants $H>1$ and $\alpha>0$ such that for $x\in M\setminus \Crit$ and every $v\in T_xM\setminus \{0\}$ we have
\begin{itemize}\item[$(F_1)$]
	$\displaystyle \quad\quad H^{-1}d(x,\Crit)^\alpha \le\frac{\lVert df(x)(v)\rVert}{\lVert v\rVert}
	\le Hd(x,\Crit)^{-\alpha}$ 
\end{itemize}
and that for every $x$, $y\in M\setminus \Crit$ with $d(x,y)<d(x,\Crit)/2$ we have
\begin{itemize}\item[$(F_2)$]
	\[
 	\left\lvert \log\,\lVert  df(x)^{-1}\rVert - \log\,\lVert df(y)^{-1}\rVert \right\rvert
	\le Hd(x,\Crit)^{-\alpha}d(x,y)
	\]
	\[
	\left\lvert \log\,\lvert \det\, df(x)\rvert - \log\,\lvert \det\, df(y)\rvert \right\rvert
	\le Hd(x,\Crit)^{-\alpha}d(x,y)
	\]
\end{itemize}

Then for any $f$-invariant ergodic Borel probability measure  $\mu$ that has positive entropy, admits $\mu$-almost everywhere only positive finite Lyapunov exponents, and satisfies $\log\,d(\cdot,\Crit)\in L^1(\mu)$ 
the conclusions (a)--(d) of Theorem~\ref{theorem} are true.
\end{theorem}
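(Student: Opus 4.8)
The plan is to derive Theorem~\ref{theorem2} from Theorem~\ref{theorem} by checking that hypotheses $(F_1)$ and $(F_2)$, together with the standing assumption $\log d(\cdot,\Crit)\in L^1(\mu)$, force conditions $(C_1)$ and $(C_2)$; throughout we take $\beta=1$, since a $C^2$ local diffeomorphism has locally Lipschitz derivative off $\Crit$ and $(F_1)$--$(F_2)$ quantify how the local Lipschitz constant blows up near $\Crit$. Two points are immediate. First, by ergodicity the Lyapunov exponents of $\mu$ are $\mu$-almost everywhere constant, so the hypothesis of only positive finite exponents automatically gives a uniform lower bound $\chi(\mu)>0$, as required in Theorem~\ref{theorem}. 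Second, condition \eqref{uh22} is precisely the standing assumption $\log d(\cdot,\Crit)\in L^1(\mu)$, so only $(C_1)$ and \eqref{uh1} remain to be verified.

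To check $(C_1)$, note that the upper bound in $(F_1)$ gives $\lVert df(x)\rVert\le H\,d(x,\Crit)^{-\alpha}$, while the lower bound says the least singular value of $df(x)$ is at least $H^{-1}d(x,\Crit)^{\alpha}$, hence $\lVert df(x)^{-1}\rVert\le H\,d(x,\Crit)^{-\alpha}$ (here $df(x)$ is invertible off $\Crit$ since $f$ is a local diffeomorphism and $\mu(\Crit)=0$). Thus $\log^{+}\lVert df\rVert$ and $\log^{+}\lVert df^{-1}\rVert$ are both dominated by $\log H+\alpha\,\lvert\log d(\cdot,\Crit)\rvert\in L^1(\mu)$, which is $(C_1)$. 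For \eqref{uh1}, fix $x$, $y\notin\Crit$ with $d(f(x),f(y))$ small and use the identity
\[
df(x)^{-1}-df(y)^{-1}=df(x)^{-1}\big(df(y)-df(x)\big)df(y)^{-1},
\]
so that, by the bounds just obtained on $\lVert df^{-1}\rVert$, it suffices to estimate $\lVert df(x)-df(y)\rVert$. Using that $f$ is $C^2$ and that, by $(F_1)$--$(F_2)$, the pertinent first- and second-order data of $f$ on the ball $B(x,\tfrac12 d(x,\Crit))$ are controlled by a fixed power of $d(x,\Crit)^{-1}$, one obtains a Lipschitz bound $\lVert df(x)-df(y)\rVert\le C\,d(x,\Crit)^{-\gamma}\,d(x,y)$ valid on a ball of radius comparable to a power of $d(x,\Crit)$, together with the fact that $d(x,y)$ and $d(f(x),f(y))$ are comparable up to powers of $d(x,\Crit)$. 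Feeding this back and choosing $G(f(x))$ to be a suitable positive power of $d(x,\Crit)$ and $H(f(x))$ a suitable power of $d(x,\Crit)^{-1}$ gives \eqref{uh1} with $\beta=1$; since $\log d(\cdot,\Crit)\in L^1(\mu)$, both $\log G$ and $\log H$ lie in $L^1(\mu)$, so $(C_2)$ holds and Theorem~\ref{theorem} applies, producing $Q_\varepsilon$ with properties (a)--(d).

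The main obstacle is this last step: one must pass from the scalar non-degeneracy data of $(F_1)$--$(F_2)$ (bounds on $\lVert df^{\pm 1}\rVert$ and Lipschitz control of $\log\lVert df^{-1}\rVert$ and $\log\lvert\det df\rvert$) to the operator-form Hölder estimate \eqref{uh1} for the inverse branches of $f$, while keeping track of how fast the domain $G$ and the constant $H$ may degenerate near $\Crit$, so that they stay $\log$-integrable. Here one invokes that $f$ is a $C^2$ local diffeomorphism, so that inverse branches exist on balls whose radius is comparable to a power of $d(\cdot,\Crit)$ and are of class $C^{1+\beta}$ with $\beta=1$ by the inverse mapping theorem for maps with H\"older derivative, cf.~\cite[Lemma~4.1.3]{BarPes:02}; the slow-recurrence condition $\log d(\cdot,\Crit)\in L^1(\mu)$ --- equivalently, the slow-return property of Lemma~\ref{lem:chu} --- then absorbs all the polynomial blow-ups. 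Alternatively, if the literal verification of \eqref{uh1} proves cumbersome, one can avoid the operator form entirely and re-run the proof of Theorem~\ref{theorem} with only the scalar distortion estimates it actually uses, which $(F_1)$--$(F_2)$ supply directly.
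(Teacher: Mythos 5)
Your treatment of $(C_1)$ coincides with the paper's: $(F_1)$ gives $\lvert\log\lVert df(x)\rVert\rvert,\ \lvert\log\lVert df(x)^{-1}\rVert\rvert\le\log H+\alpha\,\lvert\log d(x,\Crit)\rvert$, and the standing hypothesis $\log d(\cdot,\Crit)\in L^1(\mu)$ then yields integrability; likewise \eqref{uh22} is literally that hypothesis. The divergence is in \eqref{uh1}, and there your argument has a genuine gap. The identity $df(x)^{-1}-df(y)^{-1}=df(x)^{-1}\big(df(y)-df(x)\big)df(y)^{-1}$ correctly reduces matters to a Lipschitz (or H\"older) bound on $x\mapsto df(x)$ \emph{in operator norm}, but you then assert that $(F_1)$--$(F_2)$ control ``the pertinent first- and second-order data of $f$'' by a fixed power of $d(x,\Crit)^{-1}$. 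They do not: $(F_2)$ constrains only the two scalar functions $\log\lVert df^{-1}\rVert$ and $\log\lvert\det df\rvert$, and $C^2$ smoothness off $\Crit$ gives no rate whatsoever for $\lVert d^2f\rVert$ as one approaches $\Crit$. In dimension $\ge 2$ the ``rotational part'' of $df$ is invisible to $(F_2)$ (think of $df(x)=\lambda(x)O(x)$ with $\lambda$ tame and $O(x)$ orthogonal but wildly oscillating near $\Crit$), so no estimate of the form $\lVert df(x)-df(y)\rVert\le C\,d(x,\Crit)^{-\gamma}d(x,y)$ can be extracted from the stated hypotheses. Hence the step ``thus $(C_2)$ holds'' is not justified by this route.

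In fairness, the paper's own proof is no more complete here: it verifies $(C_1)$ and then simply declares ``$(C_1)$ and $(C_2)$ are satisfied,'' never addressing \eqref{uh1}; you have at least made explicit the step being glossed over. Your fallback --- re-running the proof of Theorem~\ref{theorem} using only the scalar distortion that $(F_2)$ supplies --- is the right instinct (it is how the Alves--Bonatti--Viana framework proceeds), but it is not a one-line substitution: the key contraction estimate in Lemma~\ref{lem:gris} compares $T^{-1}_{\hy}$ with $T^{-1}_{\hx}$ \emph{as operators} inside the adapted norm $\lVert\cdot\rVert'_{\hx}$, which depends on the directions along the backward orbit of $\hx$; a bound on the ratio $\lVert df(y)^{-1}\rVert/\lVert df(x)^{-1}\rVert$ does not control $\lVert T^{-1}_{\hy}(v)\rVert'$ for the particular vector $v$ at hand unless one is in dimension one or reworks that lemma from the scalar data. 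To close the argument you must either add to Theorem~\ref{theorem2} a hypothesis giving operator-norm H\"older control of $df$ near $\Crit$ (non-flatness in the usual matrix sense), or actually carry out that reworking of Lemma~\ref{lem:gris}.
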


\begin{proof}
	Observe that $(F_1)$ implies that 
	\[
	\lvert \log\,\lVert df(x)\rVert\rvert, 
	\lvert \log\,\lVert (df)^{-1}(x)\rVert\rvert \le {\rm Const}+\beta\,\lvert\log\,d(x,\Crit)\rvert
	\] 
	for all $x$ sufficiently close to $\Crit$.
	Now from $\log^+\lVert df\rVert <\lvert\log\,\lVert df\rVert\rvert$ we obtain $\log^+\lVert df\rVert\in L^1(\mu)$ and analogously $\log^+\lVert (df)^{-1}\rVert\in L^1(\mu)$.  Thus $(C_1)$ and $(C_2)$ are satisfied and Theorem~\ref{theorem} applies.
\end{proof}

Note that under the hypothesis of Theorem~\ref{theorem2} Proposition~4.1 in~\cite{AlvAra:04} implies $\log\,d(\cdot,\Crit)\in L^1(m)$ in the case that $\Crit\subset M$ is a compact submanifold of dimension $<\dim M$ and $m$ the Lebesgue measure. Moreover, if $\mu$ is absolutely continuous with respect to the Lebesgue measure and has a density $\in L^q(m)$ for some $q>1$ then $\log\,d(\cdot,\Crit)\in L^1(\mu)$~\cite[Corollary 4.2]{AlvAra:04}.

\begin{example}[Cusp maps]{\rm
 Consider an interval $I\subset\bR$,  a set $\Crit=\{s_n\}_{n}\subset I$, and a map $f\colon I\setminus \Crit\to I$ such that there exist constants $\beta>0$ and $H>0$ such that $\inf_{I\setminus\Crit}\lvert f'\rvert>0$ and for each connected component $J\subset I\setminus \Crit$  for every $x$, $y\in J$
\[
\lvert f'(x)^{-1}-f'(y)^{-1}\rvert<H\lvert x-y\rvert^\beta.
\]
Particular examples are the map $f\colon[-1,1]\setminus \{0\}\to[-1,1]$ given by 
\[f(x)=\begin{cases}2\sqrt{x}-1&\text{ if }x>0,\\
1-2\sqrt{\lvert x\rvert}&\text{ if }x<0\end{cases}
\] 
as well as the Gau{\ss} map. For both maps  it can be shown that the Lebesgue measure is invariant
and satisfies ($C_1$) and  ($C_2$). 
Certain Lorenz-like maps may also provide examples (also with criticalities and singularities such as in~\cite{LuzTuc:99} (compare expansion estimates in \cite[Section~3]{LuzTuc:99}).
}
\end{example}

Let us now consider the case that $f\colon M\to M$  is a $C^{1+\beta}$ endomorphism, that is, $\Crit$ contains at most critical points of $f$. 

\begin{theorem}\label{theorem3}
	Let $f\colon M\to M$ be a $C^{1+\beta}$ map. Then for any $f$-invariant ergodic Borel probability measure  $\mu$ that has positive entropy and admits only positive finite Lyapunov exponents the conclusions (a)--(d) of Theorem~\ref{theorem} are true.
\end{theorem}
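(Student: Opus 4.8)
The plan is to verify that $f$ and $\mu$ satisfy hypotheses $(C_1)$ and $(C_2)$ of Theorem~\ref{theorem} and then simply to quote Theorem~\ref{theorem}. Here $\Crit=\{x\in M\colon\det df(x)=0\}$, the closed critical set, is the only locus of non-invertibility of $df$; and since $\mu$ is ergodic its Lyapunov exponents are $\mu$-a.e.\ constant, so ``all exponents positive and finite'' already provides a uniform positive lower bound $\chi(\mu)=\min_i\chi_i(\mu)>0$ as Theorem~\ref{theorem} requires. Because $f$ is $C^{1+\beta}$ on the compact manifold $M$ (this is where the setting is milder than in Theorem~\ref{theorem}), the derivative $df$ is bounded and uniformly $\beta$-Hölder, say with constant $L$, and hence so is $\det df$; in particular $\log^{+}\|df\|$ is bounded, which gives the first half of $(C_1)$ for free.

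The real work is to extract the remaining $L^1$ bounds from the sole information that the exponents are finite and positive. I would set $\phi=\log|\det df|$, note that $\phi^{+}$ is bounded so $\phi^{+}\in L^1(\mu)$, and observe that $\frac1k\log|\det df^{k}(x)|$ equals the Birkhoff average $\frac1k(\phi(x)+\cdots+\phi(f^{k-1}(x)))$. On one hand, by Oseledets' multiplicative ergodic theorem this converges $\mu$-a.e.\ to $\chi_1(\mu)+\cdots+\chi_n(\mu)$, the sum of the Lyapunov exponents of $\mu$ counted with multiplicity, which is a finite number by hypothesis. On the other hand, Birkhoff's ergodic theorem for integrands with integrable positive part, together with ergodicity, gives $\mu$-a.e.\ convergence to $\int\phi\,d\mu\in[-\infty,\infty)$. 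Comparing the two limits yields $\int\log|\det df|\,d\mu=\sum_i\chi_i(\mu)>-\infty$, hence $\log|\det df|\in L^1(\mu)$, and in particular $\log^{-}|\det df|\in L^1(\mu)$, writing $\log^{-}a=\max\{-\log a,0\}$.

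Everything else follows by elementary estimates. The singular-value bound $\|df(x)^{-1}\|\le\|df(x)\|^{\,n-1}/|\det df(x)|$ gives $\log^{+}\|(df)^{-1}\|\le(n-1)\log^{+}\|df\|+\log^{-}|\det df|$, so $\log^{+}\|(df)^{-1}\|\in L^1(\mu)$ and $(C_1)$ holds in full. Since $\det df$ is $\beta$-Hölder and vanishes on $\Crit$, one has $|\det df(x)|\le{\rm const}\cdot d(x,\Crit)^{\beta}$, whence $\beta\,\lvert\log d(x,\Crit)\rvert\le\log^{-}|\det df(x)|+{\rm const}$ on all of $M$ (using compactness away from $\Crit$); therefore $\log d(\cdot,\Crit)\in L^1(\mu)$, which is \eqref{uh22}. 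For the Hölder estimate \eqref{uh1} I would use the resolvent identity together with the uniform $\beta$-Hölderness of $df$ to get $\|df(x)^{-1}-df(y)^{-1}\|\le 2L\|df(x)^{-1}\|^{2}d(x,y)^{\beta}$ whenever $d(x,y)\le(\sigma_{\min}(df(x))/2L)^{1/\beta}$, and then convert $d(x,y)$ into $d(f(x),f(y))$ by the inverse-mapping theorem for maps with Hölder derivative recalled after $(C_2)$ in the introduction; this produces \eqref{uh1} with $H$ and $G$ equal to fixed powers of $\|(df)^{-1}\|$ along the relevant inverse branch, so $\log H,\log G\in L^1(\mu)$ by the bound on $\log^{+}\|(df)^{-1}\|$ just obtained. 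With $(C_1)$ and $(C_2)$ in hand, Theorem~\ref{theorem} delivers conclusions (a)--(d) and the final statement about $f^{m}|_{Q_\varepsilon}$.

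The main obstacle is the second paragraph: squeezing $\log^{-}|\det df|\in L^1(\mu)$ out of mere finiteness of the exponents. This rests on the identity ``sum of Lyapunov exponents $=$ exponential growth rate of $|\det df^{k}|$'' from the multiplicative ergodic theorem, combined with the form of Birkhoff's theorem valid when only the positive part of the integrand is a priori integrable. A lesser, bookkeeping point is that a $C^{1+\beta}$ endomorphism may have several preimage branches, along which the Hölder constants of the local inverse differ, so $H$ and $G$ in \eqref{uh1} should be taken as the worst such constant over preimages, with $y$ understood to range over the relevant inverse branch of $x$ (equivalently, one lets these constants vary along orbits with convergent Birkhoff averages, as is anyway needed inside the proof of Theorem~\ref{theorem}); this changes nothing, since only log-integrability is used. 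Finally, $\mu(\Crit)=0$ is automatic, as otherwise $\int\log|\det df|\,d\mu=-\infty$, contradicting what was shown.
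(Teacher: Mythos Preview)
Your proof is correct and follows essentially the same route as the paper: compare the Oseledets limit $\frac{1}{n}\log|\det df^n|\to\sum_i\chi_i(\mu)$ with the Birkhoff average of $\log|\det df|$ to get $\log|\det df|\in L^1(\mu)$, use the singular-value inequality $m(df)^{\dim M}\le|\det df|\le\|df\|^{\dim M-1}m(df)$ (equivalently your bound $\|df^{-1}\|\le\|df\|^{n-1}/|\det df|$) to conclude $\log^+\|(df)^{-1}\|\in L^1(\mu)$, and use the H\"older bound $|\det df(x)|\le C\,d(x,\Crit)^\beta$ to obtain $\log d(\cdot,\Crit)\in L^1(\mu)$. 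You in fact supply more detail than the paper's own proof of Theorem~\ref{theorem3}, which omits the explicit verification of the H\"older estimate~\eqref{uh1} (tacitly relying on the inverse-mapping-theorem remark following the statement of~$(C_2)$); your resolvent-identity argument and the branch-wise bookkeeping you flag are exactly what is needed there.
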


\begin{proof}
	Consider an invariant ergodic probability measure $\mu$.
Since $f$ is $C^1$ and hence $\lVert df\rVert$ is bounded we obtain $\log^+\lVert df\rVert\in L^1(\mu)$ and we can apply the multiplicative ergodic theorem~\cite[Theorem 1.6]{Rue:79}. Note that the set $\Crit$ in this case contains only critical points of $f$.

	Let us assume that $\mu$ has finite Lyapunov exponents $\lambda_1(\mu)\ge\ldots\ge\lambda_{\dim M}(\mu)>0$. Observe that 
\begin{equation}\label{sunnal}
m(df^n(x))^{\dim M}\le 
\lvert \det \, df^n(x)\rvert 
\le \lVert df^n(x)\rVert^{\dim M-1} m(df^n(x))
\end{equation}
for every $x$ and every $n$, where $m(df(x))=\lVert(df)^{-1}(x)\rVert^{-1}$ whenever $x\notin\Crit$ and $=0$ otherwise.
The multiplicative ergodic theorem and the Birkhoff ergodic theorem (applied to $\log\,\lvert {\rm det} \,df\rvert$) together imply for a typical $x$ 
\begin{equation}\label{sunna}
 \lambda_1(\mu)+\ldots+\lambda_{\dim M} (\mu)
= \lim_{n\to\infty}\frac 1 n \log\,\lvert \det\, df^n(x)\rvert 
= \int\log\,\lvert\det\, df\rvert\,d\mu<+\infty.
\end{equation}
Hence~\eqref{sunnal} and~\eqref{sunna} imply $\log\,\lVert (df)^{-1}\rVert\in L^1(\mu)$ and thus $\log^+\lVert (df)^{-1}\rVert\in L^1(\mu)$.
Since $f$ is $C^{1+\beta}$, using for example~\eqref{sunnal} for any $x$ sufficiently close to $\Crit$ we obtain
\[
m(df(x))^{\dim M} 
\le \lvert \det \, df(x) \rvert 
\le {\rm Const}\cdot d(x,\Crit)^\beta
\] 
Hence $\log\,d(\cdot,\Crit)\in L^1(\mu)$. 
\end{proof}

\begin{example}[Continuous interval maps with flat or non-flat tops]{\rm
Any  $C^{1+\beta}$ interval map, so in particular the quadratic family and any multi-modal map are in the above setting.
%
%
If $f$ is a S-unimodal Misiurewicz map (that is, if $f\colon [a,b]\to[a,b]$ is $C^3$, satisfies $f(a)=f(b)=a$, possesses a unique critical point $c\in (a,b)$, has non-positive Schwarzian derivative, and the critical point is non-recurrent and $f$ has no sinks) then there exists a $f$-invariant absolutely continuous $\sigma$-finite Borel measure, that is finite if and only if $\log\,\lvert f'\rvert\in L^1(m)$~\cite{Zwe:04} and in this case $m$ has positive entropy and a positive Lyapunov exponent. Here the critical point can be either non-flat or flat, and we refer to~\cite{Thu:99} for an example of a  $C^\infty$ map with  a  $C^\infty$ flat top for that $\log\,\lvert f'\rvert\in L^1(m)$ is satisfied. 
}\end{example}

\begin{example}[Holomorphic maps]{\rm
Consider a continuous map of the Riemann sphere that can be analytically extended to an open neighborhood of some compact set $X\subset\overline\bC$. This includes the case of Julia sets of rational maps with  (necessarily non-flat) critical points  (see~\cite{PrzUrb:,Prz:05}) for which any ergodic measure with positive entropy fits the hypotheses of Theorem~\ref{theorem3}.
}
\end{example}

\begin{example}[Skew-products of quadratic maps]{\rm 
Consider the following family of maps $f\colon S^1\times\bR\to S^1\times\bR$
\[f(s,x)=(ds\mod 1,a-x^2+\alpha\sin(2\pi s))\]
 introduced by Viana in~\cite{Via:97}. 
Here $d\ge 2$ is an integer, $\alpha\in\bR$, and $a\in(1,2)$ is such that the quadratic map $g_a(x)=a-x^2$ has a pre-periodic (but not periodic) critical point.  
Viana~\cite{Via:97} (for $d\ge 16$) shows that Lebesgue almost every point possesses two positive Lyapunov exponents provided $\alpha$ is sufficiently small. Alves~\cite{Alv:00} deduces that $f$ possesses an absolutely continuous $f$-invariant Borel probability measure $\mu$ that hence has only positive finite Lyapunov exponents. Buzzi~et.~al~\cite{BuzSesTsu:03} generalize these results to the case $d\ge2$.  In addition, note that by the Pesin formula~\cite[Theorem~1.1]{Liu:98} the measure $\mu$ has positive entropy. 
Hence, Theorem~\ref{theorem3} applies to $\mu$.}
\end{example}

\section{Preliminaries}

We collect some preparatory results. 

\subsection{Rokhlin natural extension}

The fundamental approach in obtainig the desired ergodic properties is to study  a related invertible system that unravels the different preimages of a point. 
As our analysis is based on the asymptotic behavior of infinite orbits we need to exclude points that eventually are mapped onto $\Crit$.
Set $N^+\eqdef\{x\in M\colon f^n(x)\notin\Crit\text{ for all }n\ge 0\}$. Consider the set
\[
N\eqdef\bigcap_{n\ge 0}f^n(N^+).
\] 
Note that $N$ is invariant with respect to $f$, that is, satisfies $f(N)= N$.
Given the transformation $f\colon N\to N$  we consider the natural extension $\hf\colon\hN\to\hN$ given by $\hf(\ldots, x_{-1},x_0)=(\ldots,x_{-1},x_0,f(x_0))$  on
\[
\hN=\left\{\hx=(x_{-n})_{n\ge0}\colon f(x_{-n-1})=x_{-n}\text{ for every }n\ge 0, 
x_n\in N\right\},
\]
which is indeed an extension through the natural projection map $\pi\colon\hN\to N$ defined through $\pi\hx=x_0$. The inverse map is given by $\hf^{-1}(\ldots,x_{-1},x_0) = (\ldots,x_{-2},x_{-1})$. Giving $\hN$ the relative topology as a subset of the product $N^{\bZ_+}$, we obtain a homeomorphism $\hf$ of $\hN$. We equip $\hN$ with the metric $d(\hx,\hy)=\sum_{k\ge0}2^{-k}d(x_{-k},y_{-k})$. Given an $f$-invariant ergodic Borel probability measure $\mu$, the unique measure  $\hmu\in\cM(\hf)$ for which we have $\pi_\ast\hmu=\mu$ satisfies $h_\mu(f)=h_\hmu(\hf)$~\cite{Rok:64}. If $\mu$ is ergodic then so is $\hmu$, and $\hmu$ is also ergodic and invariant with respect to $\hf^{-1}$.

Given $\hx=(\ldots,x_{-1},x_0)$, in the following we will use the notation  $f^{-n}_{x_{-n}}$ for the corresponding inverse branch of the map $f|_{B(x_{-n},\delta)}\circ\cdots\circ f|_{B(x_0,\delta)}$ whenever $\delta$ is chosen sufficiently small such that each of those maps $f|_{B(\cdot,\delta)}$ is invertible.

\subsection{Slow recurrence towards the set $\Crit$}
 
Although no trajectory in $N$ ever hits the set $\Crit$, it may approach $\Crit$ arbitrarily closely and hence the behavior of nearby trajectories may be difficult to control. 
 However, this is ruled out under the assumption ($C_2$) as we show now.

We first provide some preliminary result.
Given $\delta>0$, denote $B(\Crit,\delta)\eqdef\bigcup_{x\in\Crit}B(x,\delta)$.

\begin{lemma}\label{lem:0}
	If $\log d(\cdot,\Crit)\in L^1(\mu)$, then $\displaystyle \sum_{n\ge 1}\mu(B(\Crit,e^{-n\delta}))<+\infty$.
\end{lemma}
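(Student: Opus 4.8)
The plan is to relate the measure of the shrinking neighborhoods $B(\Crit,e^{-n\delta})$ to the integral of $\log d(\cdot,\Crit)$ via a standard layer-cake (Cavalieri) argument. First I would observe that the hypothesis $\log d(\cdot,\Crit)\in L^1(\mu)$ forces $\mu(\Crit)=0$ (otherwise the integral would be $-\infty$), so $\log d(x,\Crit)$ is finite and negative-part integrable off a null set. I would then write
\[
\int_M \bigl(-\log d(\cdot,\Crit)\bigr)^+\,d\mu = \int_0^\infty \mu\bigl(\{x : -\log d(x,\Crit) > t\}\bigr)\,dt = \int_0^\infty \mu\bigl(B(\Crit,e^{-t})\bigr)\,dt,
\]
using that $-\log d(x,\Crit) > t$ is equivalent to $d(x,\Crit) < e^{-t}$, i.e. $x\in B(\Crit,e^{-t})$.

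Next I would exploit monotonicity: the function $t\mapsto \mu(B(\Crit,e^{-t}))$ is nonincreasing, since $B(\Crit,e^{-t})\subset B(\Crit,e^{-s})$ whenever $t\ge s$. Therefore for each integer $n\ge 1$ and all $t\in[(n-1)\delta,n\delta]$ we have $\mu(B(\Crit,e^{-n\delta})) \le \mu(B(\Crit,e^{-t}))$, hence
\[
\delta\sum_{n\ge 1}\mu\bigl(B(\Crit,e^{-n\delta})\bigr) \le \sum_{n\ge 1}\int_{(n-1)\delta}^{n\delta}\mu\bigl(B(\Crit,e^{-t})\bigr)\,dt = \int_0^\infty \mu\bigl(B(\Crit,e^{-t})\bigr)\,dt.
\]
Combining this with the Cavalieri identity above, the right-hand side equals $\int (-\log d(\cdot,\Crit))^+\,d\mu$, which is finite because $\log d(\cdot,\Crit)\in L^1(\mu)$ (and $\log d$ is bounded above on the compact manifold $M$, so its positive part is automatically integrable and only the negative part, captured here, matters). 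Dividing by $\delta>0$ gives $\sum_{n\ge1}\mu(B(\Crit,e^{-n\delta}))<+\infty$, as claimed.

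I do not expect a serious obstacle here: the only points requiring minor care are the justification that $\mu(\Crit)=0$ so the integrand is $\mu$-a.e. finite, the measurability of the sets $B(\Crit,\delta)$ (they are open, hence Borel), and the fact that $d(\cdot,\Crit)$ is bounded on $M$ so that $\log^+ d(\cdot,\Crit)\in L^1(\mu)$ trivially and the $L^1$ hypothesis is really a statement about the negative part. The core of the argument is just the monotone comparison of a series with an integral, so the proof is short.
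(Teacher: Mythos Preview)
Your proof is correct and follows essentially the same layer-cake idea as the paper. The only cosmetic difference is that the paper decomposes $B(\Crit,e^{-\delta})$ into the annuli $B(\Crit,e^{-n\delta})\setminus B(\Crit,e^{-(n+1)\delta})$ and bounds $-n\delta$ by $\log d(x,\Crit)$ on each annulus, whereas you invoke the continuous Cavalieri formula $\int(-\log d)^+\,d\mu=\int_0^\infty\mu(B(\Crit,e^{-t}))\,dt$ and then compare the series to the integral by monotonicity; both yield the same bound $\delta\sum_{n\ge1}\mu(B(\Crit,e^{-n\delta}))\le -\int_{B(\Crit,e^{-\delta})}\log d(\cdot,\Crit)\,d\mu$.
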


\begin{proof} Given $\delta>0$, observe that	
	\[\begin{split}
	0\le\sum_{n\ge 1}\mu(B(\Crit,e^{-n\delta})) & = 
	\sum_{n\ge 1}n\,\mu\big(
		B(\Crit,e^{-n\delta})\setminus B(\Crit,e^{-(n+1)\delta})\big)\\
	& \le -\frac{1}{\delta}\sum_{n\ge 1}
		-n\delta\,\mu\big(
		B(\Crit,e^{-n\delta})\setminus B(\Crit,e^{-(n+1)\delta})\big)\\
	&\le -\frac{1}{\delta}\int_{B(\Crit,e^{-\delta})}\log d(x,\Crit) \,d\mu(x).		
	\end{split}\]
This proves the lemma.
\end{proof}

Lemma~\ref{lem:0} implies $\mu(\Crit)=0$ (in fact, this follows already from $\log\,d(\cdot,\Crit)\in L^1(\mu)$), and from $f$-invariance of the measure we conclude that $\mu(N)=1$.

A typical backward branch of a point does not come too close to the set $\Crit$ in the following sense.

\begin{lemma}\label{lem:chu}
	If $\log d(\cdot,\Crit)\in L^1(\mu)$, then for any $\delta>0$ there exists a $\hmu$-full measure set $\hLambda\subset\hN$ such that for every $\hx\in\hLambda$ for only finitely many $k\ge1$ we have $\pi\hf^{-k}(\hx)\in B(\Crit,e^{-k\delta})$. 
\end{lemma}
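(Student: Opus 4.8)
The plan is to deduce this from Lemma~\ref{lem:0} via a Borel--Cantelli argument carried out on the natural extension. First I would observe that the measure $\hmu$ on $\hN$ is $\hf$-invariant and hence $\hf^{-1}$-invariant, so for each fixed $k\ge 1$ the set $\{\hx\in\hN\colon \pi\hf^{-k}(\hx)\in B(\Crit,e^{-k\delta})\}$ equals $\hf^{k}\big(\pi^{-1}(B(\Crit,e^{-k\delta}))\big)$ up to null sets, and therefore has $\hmu$-measure equal to $\hmu(\pi^{-1}(B(\Crit,e^{-k\delta}))) = \mu(B(\Crit,e^{-k\delta}))$, using $\pi_\ast\hmu=\mu$.

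Next I would sum over $k$: by Lemma~\ref{lem:0}, applied with the given $\delta>0$, we have
\[
\sum_{k\ge 1}\hmu\big(\{\hx\in\hN\colon \pi\hf^{-k}(\hx)\in B(\Crit,e^{-k\delta})\}\big)
= \sum_{k\ge 1}\mu(B(\Crit,e^{-k\delta}))<+\infty.
\]
The Borel--Cantelli lemma then yields that $\hmu$-almost every $\hx$ lies in only finitely many of these sets, i.e.\ there is a $\hmu$-full measure set $\hLambda$ such that for every $\hx\in\hLambda$, $\pi\hf^{-k}(\hx)\in B(\Crit,e^{-k\delta})$ for only finitely many $k\ge 1$. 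Taking $\hLambda$ to be the set where this holds completes the proof.

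I do not expect a serious obstacle here; the only points requiring minor care are (i) checking that $\pi\hf^{-k}(\hx)$ is well-defined $\hmu$-almost everywhere, which follows since $\hmu(\hN)=1$ and every trajectory in $N$ avoids $\Crit$ (so $\pi\hf^{-k}$ makes sense as an element of $N$), and (ii) the bookkeeping identity $\hmu(\{\pi\hf^{-k}(\hx)\in A\}) = \mu(A)$, which is just the change-of-variables $\hmu(\hf^{k}E)=\hmu(E)$ combined with $\hmu(\pi^{-1}A)=\mu(A)$. One could equivalently phrase the whole argument downstairs by noting $\sum_k \mu(f^{-k}\text{-preimage estimates})$, but working on $\hN$ is cleaner because $\hf$ is invertible and the backward branches $\pi\hf^{-k}$ are genuine single-valued maps, so the Borel--Cantelli setup is transparent.
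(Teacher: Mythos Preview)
Your proposal is correct and follows essentially the same argument as the paper: identify the event $\{\pi\hf^{-k}(\hx)\in B(\Crit,e^{-k\delta})\}$ with $\hf^{k}\pi^{-1}(B(\Crit,e^{-k\delta}))$, use $\hf$-invariance of $\hmu$ and $\pi_\ast\hmu=\mu$ to reduce the sum to $\sum_{k\ge1}\mu(B(\Crit,e^{-k\delta}))$, apply Lemma~\ref{lem:0}, and conclude by Borel--Cantelli. The paper's proof is slightly terser but the content is identical.
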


\begin{proof}
	By working in the inverse limit space, from $\hf$-invariance of $\hmu$ and from $\pi_\ast\hmu=\mu$ we can conclude that
	\[\begin{split}
	\sum_{k\ge 1}\hmu\left({\hf }^k\circ\pi^{-1}\left( B(\Crit,e^{-k\delta}\right)\right)
	& = \sum_{k\ge 1}\mu\left( B(\Crit,e^{-k\delta})\right)<\infty
	\end{split}\]
	using Lemma~\ref{lem:0}. The claim now follows from the Borel-Cantelli lemma.
\end{proof}

\subsection{Lyapunov exponents}

We now consider Lyapunov exponents. While in positive time direction there is no change to define the largest (positive) exponent $\overline\lambda$, we need to change the definition of the smallest exponent $\underline\lambda$ to handle  non-invertibility of $f$: for given $x\in M\setminus\Crit$ let 
\[
\overline{\lambda}(x)\eqdef
\limsup_{n\to\infty}\frac{1}{n}\log \,\lVert df^n(x)\rVert,
\quad
\underline\lambda(x) \eqdef
\limsup_{n\to\infty}\frac{1}{n}\log \,\lVert df^n(x)^{-1}\rVert^{-1}.
\]
(If $f$ is invertible this definition coincides with the usual one.)

If the measure $\mu$ is ergodic then the Lyapunov exponents of the derivative cocycle with generator $df(\pi\hx)$ coincide for $\hmu$-almost every $\hx$ with the Lyapunov exponents of $\mu$. In particular, if we consider the set of points that are Lyapunov regular with respect to $f$,
then for every $v\in T_xM$ the Lyapunov exponent of $(x,v)$ we have
\[
\lambda(x,v)\eqdef \lim_{n\to\infty}\frac{1}{n}\log\,\lVert df^n(x)(v)\rVert,
\]
satisfies $\underline\lambda(x)\le\lambda(x,v)\le \overline\lambda(x)$. Moreover, for $\mu$-almost every $x$ and every $v\in T_xM$
\[
\lambda(x,v)=
\lim_{n\to\infty}\frac{1}{n}\log\,\lVert 
(df(\pi\hf^{-n}(\hx)))^{-1}\cdots (df(\pi\hf^{-1}(\hx)))^{-1} 
(df(\pi\hx))^{-1}(v)\rVert 
\]
and 
\[
\chi(\mu)\eqdef
\int\log\,\lVert (df)^{-1}\rVert^{-1}\,d\mu 
\le \lambda(x,v)
\le \int\log\,\lVert df\rVert\,d\mu .
\]

\subsection{Lyapunov change of coordinates}  

We now apply nowadays standard methods from  Oseledets-Pesin theory 
by introducing a Lyapunov change of coordinates and appropriately chosen tempered sequences. We will largly follow arguments e.g. in~\cite{BarPes:02,Man:83,New:88,PrzUrb:} without always giving any other particular reference. 

Recall that a measurable function $h\colon \hN\to\bR$ is said to be  \emph{tempered} on $\widehat \Lambda\subset \hN$ with respect to $\hf\,$ if for every $\hx\in\widehat \Lambda$  
\[
\lim_{k\to\pm\infty}\frac{1}{k}\log h(\hf^k(\hx))=0.
\]
We will use the following preliminary result on tempered sequences. For completeness we provide its proof (see for example~\cite[p.~293]{New:88} and~\cite{KatHas:95} for a related result).

\begin{lemma}[Tempering kernel lemma]\label{lem:tkl}
	Given a positive measurable function $\widetilde r\colon \hN\to\bR$ that is tempered on $\widehat \Lambda\subset \hN$ with respect to $\hf$ and $\varepsilon>0$, there exists $\widehat\Gamma\subset\widehat\Lambda$ with $\hmu(\widehat\Gamma)=1$ and a positive measurable function $r$ on $\widehat\Gamma$ satisfying $0<r\le \widetilde r$ and 
	\[
	\frac{r(\hx)}{r(\hf^k(\hx))}\le e^{\lvert k\rvert\varepsilon}\quad
	\text{ for every }k\in\bZ\text{ and every }\hx\in\widehat\Gamma.
	\]
\end{lemma}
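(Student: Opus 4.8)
The plan is to build $r$ directly as an infimum of $\widetilde r$ along the orbit, discounted by an exponential weight, and then show that the temperedness hypothesis forces this infimum to be attained (hence positive) on a full-measure set. Concretely, set
\[
r(\hx)\eqdef \inf_{k\in\bZ} \widetilde r(\hf^k(\hx))\, e^{-\lvert k\rvert\varepsilon/2}.
\]
By construction $r\le \widetilde r$ (take $k=0$), and the required inequality $r(\hx)/r(\hf^j(\hx))\le e^{\lvert j\rvert\varepsilon}$ follows from a routine manipulation of the defining infimum together with the triangle inequality $\lvert k\rvert\le\lvert k-j\rvert+\lvert j\rvert$: indeed, for each $k$ one bounds $\widetilde r(\hf^k(\hx))e^{-\lvert k\rvert\varepsilon/2}\ge r(\hf^j(\hx))e^{-\lvert k-j\rvert\varepsilon/2}e^{-\lvert k\rvert\varepsilon/2+\lvert k-j\rvert\varepsilon/2}$, and since $-\lvert k\rvert+\lvert k-j\rvert\ge-\lvert j\rvert$ one gets $r(\hx)\ge r(\hf^j(\hx))e^{-\lvert j\rvert\varepsilon/2}\ge r(\hf^j(\hx))e^{-\lvert j\rvert\varepsilon}$, and symmetrically.

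The real point, and the main obstacle, is to show that $r(\hx)>0$ for $\hmu$-almost every $\hx$. Fix $\hx\in\widehat\Lambda$. Temperedness of $\widetilde r$ gives $\frac1k\log\widetilde r(\hf^k(\hx))\to0$ as $k\to\pm\infty$, so there is some $K(\hx)$ with $\lvert\log\widetilde r(\hf^k(\hx))\rvert\le\lvert k\rvert\varepsilon/4$ for all $\lvert k\rvert\ge K(\hx)$; hence for those $k$ the quantity $\widetilde r(\hf^k(\hx))e^{-\lvert k\rvert\varepsilon/2}\ge e^{-3\lvert k\rvert\varepsilon/4}\cdot 1$ is bounded below — in fact tends to a positive liminf — and the infimum over the finitely many remaining indices $\lvert k\rvert<K(\hx)$ is a finite positive number. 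Thus $r(\hx)>0$ for every $\hx\in\widehat\Lambda$. Measurability of $r$ is immediate since it is a countable infimum of measurable functions. Finally take $\widehat\Gamma\eqdef\widehat\Lambda$ (or, if one only knows temperedness on a full-measure subset, intersect with that subset); then $\hmu(\widehat\Gamma)=1$ and all asserted properties hold.

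One subtlety to watch is that the elementary estimate $\lvert\log\widetilde r(\hf^k(\hx))\rvert\le\lvert k\rvert\varepsilon/4$ only holds for $\lvert k\rvert$ large depending on $\hx$, so the lower bound on $r(\hx)$ is not uniform in $\hx$ — but that is exactly what the statement allows, since $r$ is merely required to be a positive measurable function, not bounded away from $0$. No uniformity, no use of ergodicity, and no Borel–Cantelli argument is needed here; the lemma is a pointwise statement and the proof is pointwise, orbit by orbit. The only place measure enters is in the harmless passage to a full-measure set $\widehat\Gamma$.
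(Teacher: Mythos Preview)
Your construction has a fatal sign error. With
\[
r(\hx)=\inf_{k\in\bZ}\widetilde r(\hf^k(\hx))\,e^{-\lvert k\rvert\varepsilon/2},
\]
temperedness gives $\widetilde r(\hf^k(\hx))\le e^{\lvert k\rvert\varepsilon/4}$ for all large $\lvert k\rvert$, so the $k$th term is $\le e^{-\lvert k\rvert\varepsilon/4}\to 0$ and hence $r(\hx)=0$ for \emph{every} $\hx\in\widehat\Lambda$. Your own lower bound $\widetilde r(\hf^k(\hx))e^{-\lvert k\rvert\varepsilon/2}\ge e^{-3\lvert k\rvert\varepsilon/4}$ is correct but does not ``tend to a positive liminf''; it tends to $0$.

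The fix is simply to reverse the sign of the weight: set
\[
r(\hx)\eqdef\inf_{k\in\bZ}\widetilde r(\hf^k(\hx))\,e^{+\lvert k\rvert\varepsilon/2}.
\]
Then temperedness forces the terms to $+\infty$ as $\lvert k\rvert\to\infty$, so the infimum is attained at some finite index and is strictly positive; $r\le\widetilde r$ still follows from $k=0$; and your triangle-inequality computation (now with $\lvert m\rvert-\lvert m-j\rvert\le\lvert j\rvert$) yields $r(\hx)/r(\hf^j(\hx))\le e^{\lvert j\rvert\varepsilon/2}\le e^{\lvert j\rvert\varepsilon}$. With this correction your argument is actually cleaner than the paper's: the paper builds $r$ in two stages, first $b(\hx)=\inf_{n\ge 0}\widetilde r(\hf^n(\hx))e^{n\varepsilon}$ to handle forward times, then $r(\hx)=\inf_{n\ge 0}b(\hf^{-n}(\hx))e^{n\varepsilon}$ for backward times, invoking Ma\~n\'e's lemma (\cite[Lemma~III.8]{Man:83}) twice to control temperedness along the way. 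Your one-shot infimum over $k\in\bZ$ avoids that machinery entirely.
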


\begin{proof}
	Because $\widetilde r$ is tempered, given $\varepsilon>0$ for each $\hx\in \widehat\Lambda$ there are constants $c_1$, $c_2>0$ so that $c_1e^{-\lvert k\rvert\varepsilon}\le \widetilde r(\hf^k(\hx))\le c_2e^{\lvert k\rvert\varepsilon}$ for every $k\in\bZ$. Let 
	\[
	b(\hx)\eqdef\inf_{n\ge0} \widetilde r(\hf^n(\hx))e^{n\varepsilon}.
	\] 
	Observe that $0<b(\hx)\le \widetilde r(\hx)$ and that
	\[
	b(\hf(\hx)) = \inf_{n\ge1}\widetilde r(\hf^{n+1}(\hx))e^{n\varepsilon}
	= e^{-\varepsilon} \inf_{n\ge1}\widetilde r(\hf^{n+1}(\hx))e^{(n+1)\varepsilon}
	\ge e^{-\varepsilon}b(\hx)
	\]
	for every $\hx\in\widehat\Lambda$.
Hence 
	\begin{equation}\label{jussara}
		\log b(\hf(\hx))-\log b(\hx)\ge - \varepsilon,
	\end{equation}	 
	and \cite[Lemma III.8]{Man:83} implies that $b$  is tempered with respect to $\hf$ on a full measure subset $\widehat\Gamma\subset\widehat\Lambda$.
	Hence, for every $\hx\in\widehat\Gamma$ there exists $c>0$ so that $b(\hf^{-n}(\hx))\ge ce^{-n\varepsilon}$ for every $n\ge 0$. Now for every $\hx\in\widehat\Gamma$ let 
	\[
	r(\hx)\eqdef \inf_{n\ge 0} b(\hf^{-n}(\hx))e^{n\varepsilon}.
	\] 
	Observe that	
	\[
	r(\hf^{-1}(\hx))
	=e^{-\varepsilon} \inf_{n\ge1} b(\hf^{-n}(\hx)) e^{n\varepsilon} 
	\ge e^{-\varepsilon}r(\hx)
	\]
	for every $\hx\in\widehat\Gamma$. Since $\hf$ is invertible, hence we have
	\[
	\log r(\hf(\hx))-\log r(\hx)\ge - \varepsilon
	\]
	for every $\hx\in\widehat\Gamma$. Thus ~\cite[Lemma III.8]{Man:83} implies that $r$ is tempered  with respect to $\hf$ on a full measure subset of $\widehat\Gamma$.
	This proves the lemma.
\end{proof}

We denote by $\lVert \cdot\rVert$ the norm on $T_xM$ induced by the Riemannian metric on $M$. The set $N\subset M\setminus \Crit$ is invariant with respect to $f$ and so we can consider the restriction of the tangent map  $df$ to $T_NM$. Let us denote $T(\hx)\eqdef df(\pi\hx)$, $T^n_\hx \eqdef T(\hf^{n-1}(\hx))\cdots  T(\hf(\hx))T(\hx)$, and 
\[
T^{-n}_\hx \eqdef T^{-1}(\hf^{-n}(\hx))\cdots T^{-1}(\hf^{-2}(\hx))T^{-1}(\hf^{-1}(\hx)).
\]
Under our hypothesis ($C_1$) we have $\log^+\lVert T\rVert$, $\log^+\lVert T^{-1}\rVert\in L^1(\hmu)$. Thus the assumptions of the multiplicative ergodic theorem applied to $(\hN,\hf,\hmu)$
are met (see, for example,~\cite[Theorem 3.1]{Rue:79}).

\begin{lemma}\label{lem:4b}
	Given $\varepsilon\in(0,\chi/3)$, there exists a compact set $\widehat\Lambda\subset\widehat N$ of full measure and a function $C_\varepsilon$ that is tempered on $\widehat\Lambda$ with respect to $f$ such that for every $\hx\in\widehat\Lambda$, $v\in T_{\pi\hx}M$, and $n\ge 1$ we have
	\begin{equation}\label{hence}
\lVert T^{-n}_\hx(v)\rVert
\le 
	C_\varepsilon(\hx) \,e^{-n(\chi-\varepsilon)} \lVert v\rVert .
\end{equation}
\end{lemma}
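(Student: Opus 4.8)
The plan is to derive Lemma~\ref{lem:4b} from the multiplicative ergodic theorem (Oseledets--Pesin theory) applied to the cocycle $T$ over $(\hN,\hf,\hmu)$, combined with the tempering kernel lemma just proved. Since all Lyapunov exponents of $\mu$ are positive and bounded below by $\chi=\chi(\mu)$, the Oseledets theorem gives, for $\hmu$-almost every $\hx$, an Oseledets splitting of $T_{\pi\hx}M$ whose exponents are all $\ge\chi$. In the backward direction, the cocycle $T^{-n}_\hx$ therefore contracts every nonzero vector at rate $\le e^{-n(\chi-\varepsilon/2)}$ up to a subexponential (tempered) fluctuation. Concretely, I would invoke the standard consequence of the MET that there is a measurable function $\hx\mapsto \widetilde C_\varepsilon(\hx)>0$, tempered with respect to $\hf$, such that
\[
\lVert T^{-n}_\hx(v)\rVert\le \widetilde C_\varepsilon(\hx)\,e^{-n(\chi-\varepsilon/2)}\lVert v\rVert
\]
for all $n\ge 1$, $v\in T_{\pi\hx}M$.

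Next I would apply the tempering kernel lemma (Lemma~\ref{lem:tkl}) with $\varepsilon/2$ in place of $\varepsilon$ and with $\widetilde r=1/\widetilde C_\varepsilon$ (which is tempered precisely because $\widetilde C_\varepsilon$ is), obtaining a full-measure set $\widehat\Gamma$ and a tempered function $r$ with $0<r\le 1/\widetilde C_\varepsilon$ and $r(\hx)/r(\hf^k(\hx))\le e^{\lvert k\rvert\varepsilon/2}$. Setting $C_\varepsilon(\hx)\eqdef 1/r(\hx)\ge \widetilde C_\varepsilon(\hx)$ yields a tempered function for which the bound
\[
\lVert T^{-n}_\hx(v)\rVert\le C_\varepsilon(\hx)\,e^{-n(\chi-\varepsilon/2)}\lVert v\rVert\le C_\varepsilon(\hx)\,e^{-n(\chi-\varepsilon)}\lVert v\rVert
\]
holds for every $\hx$ in the full-measure set, which is \eqref{hence}. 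The slack between $\chi-\varepsilon/2$ and $\chi-\varepsilon$ is harmless and only needs $\varepsilon<\chi/3$ to keep everything positive, matching the hypothesis.

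Finally, to get a \emph{compact} set $\widehat\Lambda$ of \emph{full} measure: strictly, $\widehat N$ itself may not be compact, but one can take a nested exhaustion by compact sets carrying measure arbitrarily close to $1$ on which $C_\varepsilon$ is bounded and the temperedness is uniform, and pass to the appropriate limit; alternatively, since the statement only requires full measure and the metric space $\widehat N$ is separable, regularity of $\hmu$ gives compact subsets of measure $>1-1/j$, and one unions over $j$ to obtain a full-measure $\sigma$-compact set, then argues on each compact piece. I expect the main subtlety to be this last point — reconciling ``compact'' with ``full measure'' — together with the bookkeeping that the MET on the natural extension produces the backward-contraction estimate with a genuinely tempered (not merely measurable) constant; both are standard in Oseledets--Pesin theory but must be stated carefully. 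The algebraic passage from the MET bound through the tempering kernel lemma is routine.
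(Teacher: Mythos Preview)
Your approach is correct and shares the same core idea as the paper---the MET yields backward contraction at rate $\chi$ up to a tempered constant---but you take an unnecessary detour. The paper defines $C_\varepsilon(\hx)\eqdef\sup_{n\ge0}\lVert T^{-n}_\hx\rVert\, e^{n(\chi-\varepsilon)}$ directly, so that \eqref{hence} holds by definition, and then proves temperedness by computing the one-step increment $\log C_\varepsilon(\hf(\hx))-\log C_\varepsilon(\hx)\le\log\sup\{1,\,e^{-\chi+\varepsilon}\lVert T^{-1}_{\hf(\hx)}\rVert\}$, which is integrable by $(C_1)$, and invoking Ma\~n\'e's Lemma~III.8. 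Since Lemma~\ref{lem:4b} only asks for $C_\varepsilon$ to be \emph{tempered} (not slowly varying in the sense of Lemma~\ref{lem:tkl}), your application of the tempering kernel lemma is redundant: once you have a tempered $\widetilde C_\varepsilon$ from the MET you are already done. Conversely, what you cite as a ``standard consequence of the MET'' is precisely the content the paper works out explicitly, so your version is shorter only by outsourcing that step. Finally, note that the paper's proof does more than the lemma states: it introduces the Lyapunov adapted norm $\lVert\cdot\rVert'_\hx$ and establishes \eqref{norm} and \eqref{dada}, which are the essential inputs for the distortion estimate in Lemma~\ref{lem:gris}; your argument, while sufficient for Lemma~\ref{lem:4b} itself, would need to supply these separately. (On the ``compact and full measure'' point: the paper's own proof also works on a full-measure measurable set without insisting on compactness here, deferring the passage to compact sets to the later Lusin-type arguments.)
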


\begin{proof}
It follows  that there exists a set $\hLambda\subset\hN$ of full measure, a positive integer $s\le\dim M$ and numbers $\chi_1<\cdots<\chi_s$ such that for every $\hx\in\hLambda$ and $v\in T_xM\setminus \{0\}$ the limit
\[
\chi(\hx,v)\eqdef
 \lim_{k\to\pm\infty}\frac{1}{k}\log \,\lVert T^k_\hx(v) \rVert 
\] 
exists and is equal to one of the numbers $\chi_i$, $i=1$, $\ldots$, $s$.  Moreover, for $\hmu$-almost every $\hx$ we have $\chi(\hx,v)=\lambda(\pi\hx,v)$.
By our assumption, 
\[
0<\chi\eqdef\chi(\mu)\le\chi(\hx,v).
\] 
Given any  $0<\varepsilon<\chi/3$, for every $\hx\in\hLambda$ and $v\in T_{\pi\hx}M$ we have
\[
\lim_{n\to\infty}\frac{1}{n}\log \,\lVert T^{-n}_\hx(v) \rVert 
< -\chi+\varepsilon.
\]
Thus there is a measurable function $C_\varepsilon\colon\widehat\Lambda\to\bR$ given by
\[
C_\varepsilon(\hx)\eqdef 
\sup_{n\ge 0}\,\lVert T^{-n}_\hx\rVert \, e^{n(\chi-\varepsilon)} <\infty.
\]
In particular, $C_\varepsilon$ is also well-defined at every $\hf^{-k}(\hx)$, $k\ge 1$.
Observe that
\[\begin{split}
C_\varepsilon(\hf(\hx))
&\le\sup\left\{ 1, \sup_{n\ge 1} \,\lVert T^{-n}_{\hf(\hx)}\rVert e^{n(\chi-\varepsilon)}\right\}\\
&\le \sup\left\{ 1, \sup_{n\ge 1} \,\lVert T^{-n-1}_\hx\rVert \lVert T^{-1}_{\hf(\hx)}\rVert
				e^{n(\chi-\varepsilon)}\right\}\\
&\le  \sup\left\{ 1,
  	e^{-\chi+\varepsilon}\lVert T^{-1}_{\hf(\hx)}\rVert
	\sup_{n\ge 1} \,\lVert T^{-n-1}_\hx\rVert 
				e^{(n+1)(\chi-\varepsilon)}\right\}	\\
&=  \sup\left\{ 1, 
  	e^{-\chi+\varepsilon}\lVert T^{-1}_{\hf(\hx)}\rVert C_\varepsilon(\hx)\right\}	
  \le  C_\varepsilon(\hx) \sup\left\{ 1, 
  	e^{-\chi+\varepsilon}\lVert T^{-1}_{\hf(\hx)}\rVert\right\}	.		
\end{split}\]
We obtain
\[
\log \,C_\varepsilon(\hf(\hx)) - \log\, C_\varepsilon(\hx) 
\le \log \sup\left\{ 1, e^{-\chi+\varepsilon}\lVert T^{-1}_{\hf(\hx)}\rVert\right\},
\]
which is bounded from above. This justifies that $\log C_\varepsilon\circ \hf - \log C_\varepsilon \in L^1(\hmu)$ and hence that $C_\varepsilon$ is tempered on $\hLambda$, according to~\cite[Lemma~III.8]{Man:83}.  

Given $\varepsilon>0$, $\hx\in\hLambda$, and  $v\in T_{\pi\hx}M$, let us introduce a Lyapunov change of coordinates $\lVert v\rVert\mapsto\lVert v\rVert'_{\hx}$ by 
\[
\lVert v\rVert_\hx'\eqdef 
\sup_{n\ge 0} \,\lVert T^{-n}_\hx(v)\rVert \, e^{n(\chi - \varepsilon)}.
\]
Notice that for every $\hx$ we have $C_\varepsilon(\hx)\ge 1$  and
\begin{equation}\label{norm}
\lVert v\rVert  \le 
\lVert v\rVert'_\hx \le 
C_\varepsilon(\hx)\lVert v\rVert.
\end{equation}
Further, we have
\begin{eqnarray}
\lVert T^{-1}_\hx(v)\rVert'_{\hf^{-1}(\hx)}
&=& \sup_{n\ge 0} \,\lVert T^{-n}_{\hf^{-1}(\hx)}T^{-1}_\hx(v) \rVert 
   e^{n(\chi - \varepsilon)}\notag\\
&=& e^{-\chi+\varepsilon} 
	\left(\sup_{n\ge 0}\, \lVert T^{-n-1}_\hx (v)\rVert 
		e^{(n+1)(\chi - \varepsilon)}\right)\notag\\
&\le& 	e^{-\chi+\varepsilon} \lVert v\rVert'_\hx \,,\label{dada}
\end{eqnarray}
which implies that for every $n\ge 1$
\begin{equation*}
\lVert T^{-n}_\hx(v)\rVert'_{\hf^{-n}(\hx)}\le e^{-n(\chi-\varepsilon)}\lVert v\rVert'_\hx
\end{equation*}
Using~\eqref{norm}, this implies~\eqref{hence}. The lemma is proved.
\end{proof}

\subsection{Construction of local unstable manifolds}

Non-invertibility of $f$ implies that points not necessarily have unique local unstable manifolds.
However, given $R>0$ and $x\in N$, based on the natural extensions we can study the following type of sets 
\[
\big\{ y\in N\colon \exists(\ldots,y_{-1},y)=\hy\in \hN \, \forall k\ge 0\,\colon
d(x_{-k},y_{-k})<R\big\}.
\]  
Related constructions of unstable manifolds have been introduced by Ledrappier~\cite{Led:81} in the case of piecewise $C^{1+\beta}$ interval maps. They can also be read from Newhouse~\cite{New:88} for $C^{1+\beta}$ endomorphisms in the higher-dimensional case. The case of $C^2$ maps with singular points under slightly stronger conditions was also covered in~\cite{KatStr:86}
 and the holomorphic case in~\cite{PrzUrb:}.

We will use the following result.

\begin{lemma}\label{lem:gris}
	Given $\varepsilon\in(0,\chi/2)$ and $\delta\in(0,1)$, there exist a compact set $\widehat\Lambda_1=\widehat\Lambda_1(\varepsilon,\delta)\subset\widehat\Lambda$ 
	and a number 
	 $\rho=\rho(\varepsilon,\delta)>0$ such that $\hmu(\widehat\Lambda_1)>1-\delta$ and that for every $\hx\in \widehat\Lambda_1$, $x=\pi\hx$, $y\in B(x,\rho)$, and $k\ge1$ we have
	\begin{equation}\label{hedu}
		\lVert df^{-k}_{x_{-k}}(y)\rVert 
                \le e^{-k(\chi-2\varepsilon)}.
	\end{equation}
	In particular 
	\[
	d(f^{-k}_{x_{-k}}(x),f^{-k}_{x_{-k}}(y))\le e^{-k(\chi-2\varepsilon)}d(x,y).
	\]
\end{lemma}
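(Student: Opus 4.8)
The plan is to derive Lemma~\ref{lem:gris} from the infinitesimal contraction estimate of Lemma~\ref{lem:4b} by a bootstrapping argument along the backward orbit, controlling the error between the derivative cocycle $T^{-k}_\hx$ and the genuine inverse branch $df^{-k}_{x_{-k}}$ via the H\"older hypothesis ($C_2$) together with the slow-recurrence Lemma~\ref{lem:chu}. First I would fix $\varepsilon\in(0,\chi/2)$ and $\delta\in(0,1)$, and invoke Lemma~\ref{lem:4b} to obtain the tempered function $C_\varepsilon$ and the full-measure set $\widehat\Lambda$. Applying the Tempering kernel lemma (Lemma~\ref{lem:tkl}) to (a suitable reciprocal of) $C_\varepsilon$, and also to the functions $G$, $H$ from ($C_2$) and to $d(\cdot,\Crit)$ along backward orbits, I would pass to a full-measure subset on which all these quantities vary subexponentially with rate $\ll\varepsilon$; then by a Lusin/Egorov argument restrict to a compact set $\widehat\Lambda_1$ of measure $>1-\delta$ on which $C_\varepsilon$, $1/G$, $H$ and $1/d(\pi\hx,\Crit)$ are all bounded by a single constant and on which the ``only finitely many close returns'' of Lemma~\ref{lem:chu} becomes ``no close return at all'' after shrinking.

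The core estimate is then inductive. Write $x_{-j}=\pi\hf^{-j}(\hx)$ and $y_{-j}=f^{-j}_{x_{-j}}(y)$. Using ($C_2$), the derivative $df(x_{-j})^{-1}$ and $df(y_{-j})^{-1}$ differ by at most $H(x_{-j+1})\,d(x_{-j+1},y_{-j+1})^\beta$ in operator norm, provided $d(x_{-j+1},y_{-j+1})<G(x_{-j+1})$. On $\widehat\Lambda_1$ the factors $H(x_{-j+1})$ are uniformly bounded and $G(x_{-j+1})$ uniformly bounded below, so as long as the orbit of $y$ stays $\rho$-close we may write $df^{-k}_{x_{-k}}(y)=T^{-k}_\hx+(\text{error})$, where the error is a telescoping sum each of whose terms carries a factor $d(x_{-j+1},y_{-j+1})^\beta$. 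Feeding in the provisional bound $d(x_{-j},y_{-j})\le e^{-j(\chi-2\varepsilon)}d(x,y)$ from steps $<k$, the $j$-th error term is $O\!\big(e^{-j\beta(\chi-2\varepsilon)}\big)$ times the cocycle norm, and summing the geometric series shows the total perturbation of $\lVert df^{-k}_{x_{-k}}(y)\rVert$ relative to $C_\varepsilon(\hx)e^{-k(\chi-\varepsilon)}$ is bounded by a constant depending only on $\varepsilon,\delta$. Choosing $\rho=\rho(\varepsilon,\delta)$ small enough that this constant times $\rho^\beta$ is absorbed into the gap between $e^{-k(\chi-\varepsilon)}$ and $e^{-k(\chi-2\varepsilon)}$, and that the constant $\sup_{\widehat\Lambda_1}C_\varepsilon$ is dominated by $e^{k\varepsilon}$ for all $k\ge1$, yields \eqref{hedu}; the last displayed inequality follows by integrating the derivative bound along the segment from $x$ to $y$ (for $\rho$ small the ball $B(x,\rho)$ lies in a single chart on which each $f|_{B(\cdot,\delta)}$ is invertible).

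The main obstacle is making the bootstrap rigorous: the error estimate at stage $k$ presupposes the contraction bound at all earlier stages \emph{and} that $y_{-j}$ has not yet escaped the region where ($C_2$) applies, so one must run a simultaneous induction establishing, for each $k$, both $d(x_{-k},y_{-k})\le e^{-k(\chi-2\varepsilon)}d(x,y)$ and $d(x_{-k},y_{-k})<\min\{G(x_{-k}),\tfrac12 d(x_{-k},\Crit)\}$ — the latter using the uniform lower bounds on $\widehat\Lambda_1$ and the exponential smallness of the left side. One must also be careful that ``$C_\varepsilon$ tempered'' gives $C_\varepsilon(\hf^{-k}(\hx))\le C_\varepsilon(\hx)e^{k\varepsilon}$ only after the tempering-kernel regularization, which is exactly why Lemma~\ref{lem:tkl} is needed rather than Lemma~\ref{lem:4b} alone; everything else is a routine telescoping estimate. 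A secondary technical point is that $df^{-k}_{x_{-k}}$ is a composition of inverse branches each defined only on a $\delta$-ball, so $\rho$ must additionally be taken $<\delta$ and small enough that each successive image $y_{-j}$ stays in $B(x_{-j},\delta)$, which is guaranteed by the contraction bound once it is in force.
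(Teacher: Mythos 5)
Your proposal is correct and follows essentially the same strategy as the paper: use the cocycle bound of Lemma~\ref{lem:4b}, control the discrepancy between $T^{-k}_\hx$ and $df^{-k}_{x_{-k}}$ via the H\"older condition ($C_2$), regularize all the relevant tempered quantities with the Tempering Kernel Lemma (Lemma~\ref{lem:tkl}), run a simultaneous induction in $k$, and finally pass to a compact positive-measure set by Lusin. The organizational differences are worth noting but minor. The paper works in the Lyapunov norm $\lVert\cdot\rVert'_\hx$ introduced in the proof of Lemma~\ref{lem:4b}, so that the one-step estimate \eqref{MMM} is a single clean inequality in that norm which then iterates immediately; you instead stay in the ambient norm and telescope the error directly, carrying $C_\varepsilon$ factors by hand, which is equivalent but slightly more bookkeeping. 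The paper also invokes Lemma~\ref{lem:chu} (with $\delta=\chi$) to fix a finite index $m(\hx)$ beyond which $x_{-k}\notin B(\Crit,e^{-k\chi})$, and then makes a separate measurable choice $R(\hx)$ to control the first $m(\hx)$ inverse branches before the estimates take over; you instead lean on temperedness of $\log d(\pi\cdot,\Crit)$ along the backward orbit to get a uniform lower bound $d(x_{-k},\Crit)\ge c\,e^{-k\varepsilon}$ for all $k$, which avoids the two-phase argument.

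One imprecision to flag: the claim that Lemma~\ref{lem:chu}'s ``only finitely many close returns'' can be upgraded to ``no close return at all after shrinking'' on $\widehat\Lambda_1$ is not correct as stated, and also not what you actually need. Restricting to a compact set bounds the tempered constants at $\hx$, but it does not remove close returns of the backward orbit $\hf^{-k}(\hx)$, which in general leaves $\widehat\Lambda_1$. What the tempering regularization really gives is a lower bound $d(x_{-k},\Crit)\ge c\,e^{-k\varepsilon}$ (and likewise $G(x_{-k})\ge c\,e^{-k\varepsilon}$) valid for all $k\ge0$; since the bootstrap produces $d(x_{-k},y_{-k})\le e^{-k(\chi-2\varepsilon)}\rho$ with $\chi-2\varepsilon>\varepsilon$, choosing $\rho$ small makes the contraction win at every stage. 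That is what keeps $y_{-k}$ in the domain where ($C_2$) applies and where $f$ is invertible on a small ball. With that correction your induction closes and matches the paper's conclusion.
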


\begin{proof}
Given points $x$, $y\in M\setminus \Crit$, a vector $v\in T_xM$, and points $\hx$, $\hy\in\hN$ with  $\pi\hx=x$, $\pi\hy=y$, then by our H\"older assumption ($C_2$)
we have
\[
\lVert T^{-1}_\hx(v) - T^{-1}_{\hy}(v)\rVert
\le H(x)\,d(x,y)^\beta\lVert v\rVert
\]
whenever $d(x,y)<G(x)$.
For notational simplicity, let us  refrain from considering the length change of a given vector $v\in T_xM$ by changing between charts, which can be made arbitrarily small by shrinking the domain of charts. When $d(\hx,\hy)$ is small, let us define $\lVert v\rVert'_\hy=\lVert v\rVert'_\hx$ for $v\in T_{\pi\hx}M$. We then obtain from the triangle inequality and from~\eqref{dada} and~\eqref{norm}
\begin{eqnarray}
\lVert T^{-1}_\hy(v)\rVert_{\hf^{-1}(\hy)}' 
&=&\lVert T^{-1}_\hy(v)\rVert_{\hf^{-1}(\hx)}'\notag \\
&\le& 
\lVert T^{-1}_\hx(v)\rVert_{\hf^{-1}(\hx)}' +
 \lVert T^{-1}_{\hy}(v) - T^{-1}_{\hx}(v)\rVert_{\hf^{-1}(\hx)}'\notag\\
&\le& e^{-\chi+\varepsilon}\lVert v\rVert_{\hx}'  
	+ C_\varepsilon(\hf^{-1}(\hx))  \lVert T^{-1}_\hy(v) - T^{-1}_\hx(v)\rVert \notag\\
&\le& 	e^{-\chi+\varepsilon}\lVert v\rVert_{\hx}'  
	+ C_\varepsilon(\hf^{-1}(\hx))  H(\pi\hx) \,d(\pi\hx,\pi\hy)^\beta
	\lVert v\rVert'_{\hx}	\,\,. \label{MMM}
\end{eqnarray}
Let 
\[
\widetilde r(\hx)\eqdef 
\min\left\{
		\left(\frac{C_\varepsilon(\hx)^{-1}e^{-\chi+2\varepsilon}-e^{-\chi+\varepsilon}}
		{C_\varepsilon(\hf^{-1}(\hx))H(\pi\hx)}\right)^{1/\beta},
		1, G(\pi\hx)\right\}.
\]
Recall that $C_\varepsilon$ is tempered on $\hLambda$.
From $\log H\in L^1(\mu)$ we conclude $\log H\circ \pi\in L^1(\hmu)$ 
%
%
and hence that $H\circ\pi$ is tempered on a full measure subset of $\hLambda$. Analogously, we conclude that $G\circ\pi$ and hence that $\widetilde r$ is tempered  on  a full measure subset of   $\hLambda$.
Lemma~\ref{lem:tkl} implies that there exists a full-measure subset $\widehat\Lambda_1\subset\widehat\Lambda$ and a positive measurable function $\hx\mapsto r(\hx)$ defined on $\widehat\Lambda_1$ such that for every $\hx\in\widehat\Lambda_1$ we have $0<r(\hx)\le\widetilde r(\hx)$ and for every $k\in \bZ$
\begin{equation}\label{nines}
\frac{r(\hx)}{r(\hf^{k}(\hx))}\le e^{\lvert k\rvert \varepsilon}.
\end{equation}
We  hence obtain for every $\hx\in \widehat\Lambda_1$ and every  $\hy
$ satisfying $d(\pi\hx,\pi\hy)<r(\hx)$
\[
\lVert T^{-1}_\hy(v)\rVert \le e^{-\chi+2\varepsilon}\lVert v\rVert 
\]
and in particular  together with~\eqref{nines} also
 \[
d(\pi\hf^{-1}(\hx),\pi\hf^{-1}(\hy))
\le e^{-\chi+2\varepsilon}d(\pi\hx,\pi\hy)
< e^{-\chi+3\varepsilon}  e^{-\varepsilon} r(\hx) \le r(\hf^{-1}(\hx)).
\]

After this preparation of distortion control for one single iteration, we want to achieve uniform contraction on an entire backward branch.
Removing at most a set of points of zero measure, we may assume that for every  $\hx\in\widehat\Lambda_1$ the above statements and the statements of Lemma~\ref{lem:4b} and  Lemma~\ref{lem:chu} applied to $\delta=\chi$ are true.
That is, in particular, for every point $\hx=(\ldots,x_{-1},x_0)\in\widehat\Lambda$ there exists a number $m(\hx)\ge 1$ such that for every $ k\ge m(\hx)$
\[
\lVert T^{-k}_\hx(v)\rVert \le e^{-k(\chi-2\varepsilon)}\lVert v\rVert
\]
and that
\[
x_{-k}\notin B(\Crit,e^{-k\chi}).
\] 
We can now choose a positive measurable function $R\colon\hLambda\to\bR$ such that for every $0\le k\le m(\hx)$ all the inverse branches $f^{-k}_{x_{-k}}\colon B(x_0,r(\hx)R(\hx))\to M$ are well-defined, that 
\[
\diam \Big(f^{-k}_{x_{-k}}(B(x_0,r(\hx)R(\hx)))\Big) 
<\min\Big\{ e^{-k\chi}, r(\hf^{-k}(\hx))\Big\} , 
\]
and that distortion is bounded in the sense that  for every $0\le k\le m(\hx)$ 
\begin{equation}\label{lor1}
\max\frac{\lVert T^{-k}_{\hy_1}(v)\rVert}{\lVert T^{-k}_{\hy_2}(v)\rVert} 
\le e^{k\varepsilon} C_\varepsilon(\hx)^{-1},
\end{equation}
where the maximum is taken over all $\hy_i$ with $d(\pi\hy_i,\pi\hx)\le r(\hx)R(\hx)$ and all $v\in T_\hx M$.
Thus, applying~\eqref{lor1} and then~\eqref{hence}, we obtain for every $y\in B(x_0,r(\hx)R(\hx))$ and also every $0\le k\le m(\hx)$
\[
\lVert df^{-k}_{x_{-k}}(y)\rVert 
\le e^{k\varepsilon} C_\varepsilon(\hx)^{-1}\,
	\,\lVert df^{-k}_{x_{-k}}(x)\rVert
\le e^{k\varepsilon}\,  e^{-k(\chi-\varepsilon)}
	= e^{-k(\chi-2\varepsilon)}.
\] 
In particular,  for $m=m(\hx)$
\[
d(f^{-m}_{x_{-m}}(y),x_{-m})\le r(x_{-m})\le G(x_{-m})
\]
and $f^{-m}_{x_{-m}}\colon B(x_0,e^{-m\chi})\to M$ is well-defined.
After making such choices, we are in the above setting of distortion control  and given $y\in B(x_0,r(\hx)R(\hx))$, we have 
\[
d(f^{-(m+1)}_{x_{-(m+1)}}(y),x_{-(m+1)}) \le  r(x_{-(m+1)}) \le G(x_{-(m+1)}).
\]
By induction, we can now  conclude that~\eqref{hedu} is true for every $\hx\in\hLambda_1$, for every $y\in B(\pi\hx,r(\hx)R(\hx))$ and every $k\ge 1$.

The claimed properties now follow from the Lusin theorem. 
\end{proof}

\subsection{Uniform recurrence}

Let $\cP$ be any finite measurable partition of $M$. Denote by $\cP(x)$ the partition element that contains $x$. Let $\varphi_1$, $\ldots$, $\varphi_K\colon M\to\bR$ be continuous functions.
The following fact is an immediate consequence of the Birkhoff ergodic theorem.

\begin{lemma}\label{lem:bur}
	Given numbers $\varepsilon>0$ and $\delta>0$ and a positive measure set $\hLambda_1\subset\hN$, there exist a positive integer $n_2=n_2(\varepsilon,\delta)$ and a compact set $\hLambda_2=\hLambda_2(\varepsilon,\delta)$ such that $\hmu(\hLambda_2)>1-\delta$ so that for every $\hx\in\hLambda_2$ and every $n\ge n_2$ we have
	\begin{itemize}
	\item[a)] $\pi\hf^k(\hx)\in\cP(\pi\hx)$ and $\hf^k(\hx)\in\hLambda_1$  for some number $k\in [n, n+\varepsilon n]$, \\[-0.2cm]
	\item[b)] for $x=\pi \hx$ for every $i=0$, $\ldots$, $K$ and every $k\ge n$ we have
	\begin{equation*}
		\left\lvert \frac{1}{k}
		\left(\varphi_i(x)+\varphi_i(f ^1(x))+\ldots+\varphi_i(f^{k-1}(x))\right) 
		- \int \varphi_i\,d\mu\right\rvert \le \varepsilon.
	\end{equation*}
	\end{itemize}
\end{lemma}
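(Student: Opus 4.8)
The plan is to apply the Birkhoff ergodic theorem to two families of functions simultaneously and then invoke Egorov-type uniform convergence on a large-measure set. First I would fix the finite measurable partition $\cP$ and, for each partition element $P\in\cP$, consider the characteristic function $\mathbf 1_P$ together with the characteristic function $\mathbf 1_{\hLambda_1}$ on $\hN$; since $\hmu(\hLambda_1)>0$, ergodicity of $\hmu$ with respect to $\hf$ gives, for $\hmu$-a.e.\ $\hx$, that the Birkhoff averages $\frac1n\sum_{k=0}^{n-1}\mathbf 1_{\hLambda_1}(\hf^k(\hx))$ converge to $\hmu(\hLambda_1)>0$. In particular the orbit $(\hf^k(\hx))_{k\ge0}$ visits $\hLambda_1$ with positive frequency, so the gaps between consecutive visit times grow sublinearly: for $\hmu$-a.e.\ $\hx$ and every $\varepsilon>0$ there is $n_2'(\hx)$ such that every interval $[n,n+\varepsilon n]$ with $n\ge n_2'(\hx)$ contains a visit time $k$ to $\hLambda_1$. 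Simultaneously, applying the same reasoning to the element $P=\cP(\pi\hx)$ — and using that the pushforward $\pi_\ast\hmu=\mu$ gives $\mu(\cP(\pi\hx))>0$ for $\mu$-a.e.\ $\pi\hx$, equivalently $\hmu$-a.e.\ $\hx$ — one gets the same density statement for the set $\hf^{-1}(\pi^{-1}(\cP(\pi\hx)))$; intersecting the two full-measure sets and refining $n_2'(\hx)$ yields a.e.\ a time $k\in[n,n+\varepsilon n]$ simultaneously satisfying $\pi\hf^k(\hx)\in\cP(\pi\hx)$ and $\hf^k(\hx)\in\hLambda_1$, which is assertion~(a).

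For assertion~(b) I would apply the Birkhoff ergodic theorem to the functions $\varphi_i\circ\pi\in L^1(\hmu)$ (each $\varphi_i$ is continuous, hence bounded, on the compact manifold $M$), obtaining for $\hmu$-a.e.\ $\hx$ that $\frac1k\sum_{j=0}^{k-1}\varphi_i(f^j(\pi\hx))\to\int\varphi_i\,d\mu$ as $k\to\infty$, for each $i=1,\dots,K$. Since $\varphi_0$ does not appear among the hypotheses I would simply read the index range as $i=1,\dots,K$ (or set $\varphi_0\eqdef 0$). Pointwise convergence a.e.\ gives, for $\hmu$-a.e.\ $\hx$ and every $\varepsilon>0$, an integer $n_2''(\hx)$ beyond which the deviation in~(b) is at most $\varepsilon$ for all $k\ge n_2''(\hx)$ and all $i$.

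The only remaining point — and the one requiring a small argument rather than a bare citation — is to upgrade the pointwise, $\hx$-dependent thresholds $n_2(\hx)=\max\{n_2'(\hx),n_2''(\hx)\}$ to a uniform threshold on a set of measure $>1-\delta$. Here I would invoke Egorov's theorem: the (measurable) function $\hx\mapsto n_2(\hx)$ is finite $\hmu$-a.e., so the sets $\{\hx:n_2(\hx)\le N\}$ increase to a full-measure set as $N\to\infty$; choose $N$ with $\hmu(\{n_2\le N\})>1-\delta/2$, set $n_2(\varepsilon,\delta)\eqdef N$, and let $\hLambda_2$ be a compact subset of $\{n_2\le N\}$ with $\hmu(\hLambda_2)>1-\delta$ (inner regularity of the Borel measure $\hmu$ on the metrizable space $\hN$). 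On $\hLambda_2$ both~(a) and~(b) hold for every $n\ge n_2$. I do not expect any genuine obstacle; the mild subtlety is only in packaging the two Birkhoff statements and the frequency-of-visits argument together with the compact-set extraction, and in being careful that~(a) is a statement about one orbit revisiting both a partition element determined by its own starting point and the fixed set $\hLambda_1$.
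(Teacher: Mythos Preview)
Your overall strategy---Birkhoff plus Egorov plus inner regularity---is the right one and matches the paper's approach in spirit. However, there is a genuine gap in your argument for part~(a). You apply the density-of-visits argument separately to $\hLambda_1$ and to $\pi^{-1}(\cP(\pi\hx))$, obtaining for $\hmu$-a.e.\ $\hx$ that every window $[n,n+\varepsilon n]$ eventually contains a visit time $k_1$ to $\hLambda_1$ and a visit time $k_2$ to $\pi^{-1}(\cP(\pi\hx))$. But ``intersecting the two full-measure sets and refining $n_2'(\hx)$'' does \emph{not} yield a single time $k$ at which both events occur: nothing prevents $k_1\neq k_2$ for every $n$. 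Two subsets of $\bN$ can each have positive lower density (hence meet every long enough window) while being disjoint.

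The fix is simple and is exactly what the paper does: apply Birkhoff to the indicator of the \emph{intersection} $\hLambda_1\cap\hP_i$ for each partition element $\hP_i=\pi^{-1}(P_i)$ with $\hmu(\hLambda_1\cap\hP_i)>0$. Since there are only finitely many partition elements, one obtains a single full-measure set on which, for every $i$, the frequency of visits to $\hLambda_1\cap\hP_i$ converges to $\hmu(\hLambda_1\cap\hP_i)>0$; then your own gap/window argument (or the paper's explicit count of visits in $[n,n(1+\varepsilon)]$) gives a time $k\in[n,n+\varepsilon n]$ with $\hf^k(\hx)\in\hLambda_1\cap\hP_i$, which is precisely the simultaneous condition in~(a). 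The Egorov/regularity step at the end of your proposal then goes through unchanged. Part~(b) is fine as you wrote it.
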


\begin{proof}
	Suppose that  the partition $\cP$ has $j$ elements $\cP=\{P_1,\ldots,P_j\}$. This partition of $M$ naturally induces a partition $\widehat \cP=\{\hP_1,\ldots,\hP_j\}$ of $\hN$ given by $\widehat\cP_i=\{\hx\colon \pi\hx\in P_i\}$.
	Let $\kappa\eqdef\min\{\varepsilon,\mu(\hLambda_1\cap\hP_i)/4\colon i=1,\ldots,j\}$. From the Birkhoff ergodic theorem we derive that for each $i=1$, $\ldots$, $j$ for $\hmu$-almost every $\hx$ we have 
	\[
	\lim_{n\to\infty}
	\frac{1}{n}{\rm card}\{k\in\{0,\ldots,n-1\}\colon \hf^k(\hx)\in \hLambda_1\cap \hP_i\}
	=\hmu(\hLambda_1\cap \hP_i)
	\] 
	Using the Eg'orov theorem and the  Lusin theorem, we can conclude that there exists a compact set $\hLambda_2\subset\hN$ of measure $\ge 1- \delta$ such that the convergence is uniform on $\hLambda_2$. Hence, we find a number $n_2\ge 1$ such that for every $i=0$, $\ldots$, $j$, every $\hx\in\hLambda_2$, and every $n\ge n_2$ we have
	\[
	\left\lvert {\rm card}\{k\in\{0,\ldots,n-1\}\colon \hf ^k(\hx)\in \hLambda_1\cap \hP_i\} -
	 \mu(\hLambda_1\cap \hP_i)n\right\rvert \le \kappa^2n.
	\]
Assume that $n_2$ is chosen large enough  that $\min_{i=1,\ldots,j}(\mu(\hLambda_1\cap \hP_i)-3\kappa)n_2\varepsilon>1$.	
Thus, for every $\hx\in\hLambda_2$, every $i=0$, $\ldots$, $j$, and every $n\ge n_2$ we obtain
\[\begin{split}
{\rm card}&\{k\in\{n,\ldots,n(1+\varepsilon)-1\}\colon \hf ^k(\hx)\in \hLambda_1\cap \hP_i\} \\
&\ge \mu(\hLambda_1\cap \hP_i)n(1+\varepsilon)
	 -n(1+\varepsilon)\kappa^2
	 - \mu(\hLambda_1\cap \hP_i)(n-1)
	 -(n-1)\kappa^2\\
&\ge n\varepsilon(\mu(\hLambda_1\cap \hP_i)-\kappa^2) -2n\kappa^2\\
&\ge n\varepsilon(\mu(\hLambda_1\cap \hP_i)-3\kappa) >1.
\end{split}\]	
Clearly, $f^k(\pi\hx)\in P_i$ if $\hf^k(\hx)\in\hP_i$. In particular, this is true for the index $i$ with $P_i=\cP(\pi\hx)$. This proves a).

Similar arguments apply to Birkhoff averages of the continuous function $\varphi_1$, $\ldots$, $\varphi_K$ that guarantee that  the above applies and at the same time Birkhoff averages are uniformly converging on $\hLambda_2$.
This proves the lemma.
\end{proof}

\section{Proof of Theorem~\ref{theorem}}

To construct  a repeller on which the topological entropy $f$ is roughly equal to $h_\mu(f)$ (and on which the other required dynamic properties are also satisfied), we will produce a sufficiently large number of points that have distinguishable orbits of a certain length. 
First, recall that by~\cite[Theorem 1.1]{Kat:80} we have 
\[
h_\mu(f)=\lim_{\widetilde\varepsilon\to 0}\liminf_{n\to\infty}\frac{1}{n}\log\, N(\mu,n,\widetilde\varepsilon),
\]
where $N(\mu,n,\widetilde\varepsilon)$ denotes the minimal number of sets 
\[
B_n(x,\widetilde\varepsilon)\eqdef\left\{y\colon d(f^k(x),f^k(y))\le\widetilde\varepsilon\text{ for every }0\le k\le n-1\right\}
\]
that are needed to cover a set of measure greater than $1-\delta$. Here $\delta\in(0,1)$ is an arbitrary number.
Hence,  fixing some $\delta\in (0,1)$ and some number $\varepsilon>0$  there exists $\widetilde\varepsilon_0=\widetilde\varepsilon_0(\varepsilon,\delta)>0$ such that for every $\widetilde\varepsilon\le\widetilde\varepsilon_0$ there exists a number $n_0=n_0(\widetilde\varepsilon,\varepsilon,\delta)$ satisfying the following. Given a set $A$ of measure $> 1-2\delta$ and a number $n\ge n_0$, then any $(n,\widetilde\varepsilon)$-separated set $E\subset A$ of maximal cardinality  satisfies
  \begin{equation}\label{spaet}
\log  {\rm card}\, E\ge {n( h_\mu(f )-\varepsilon)}.
  \end{equation}

Let us start by choosing some $\varepsilon\in(0,\chi/3)$ and let $\widetilde\varepsilon_0=\widetilde\varepsilon_0(\varepsilon,\delta)$.

Those points in $E$ with distinguishable orbits in the following will be placed in some Lyapunov regular set and hence additionally will have uniformly hyperbolic behavior. 
By the preparatory results in Lemmas~\ref{lem:4b} and~\ref{lem:gris} there exist a number $\rho=\rho(\varepsilon/3,\delta)>0$ and a compact set $\widehat\Lambda_1=\widehat\Lambda_1(\varepsilon/3,\delta)\subset\widehat \Lambda$ of measure $>1-\delta$ such that for every $\hx\in\widehat\Lambda_1$ and every  $y\in B(\pi\hx,\rho)$,   for all $k\ge1$ we have
	\begin{equation}\label{thomas}
	\lVert df^{-k}_{x_{-k}}(y)\rVert \le e^{-k(\chi-2\varepsilon)} 
	\end{equation}
	and in particular
	\begin{equation}\label{thomasb}
		d(f^{-k}_{x_{-k}}(y), x_{-k})
		\le e^{-k(\chi-2\varepsilon)}d(\pi\hx,y). 
	\end{equation}
Let $\Lambda_1\eqdef\pi\widehat\Lambda_1$ and note that $\mu(\Lambda_1)>1-\delta$.	

We cover $\Lambda_1$ by balls	
\begin{equation}\label{regcov}
B(x_1,\rho/2),\ldots,B(x_j,\rho/2)
\end{equation}
that are centered at points $x_i\in\Lambda_1$, $i=1$, $\ldots$, $j$. We choose a number $n_1=n_1(j,\varepsilon)$ satisfying
\begin{equation}\label{refer}
 n_1\ge \frac{\log j}{\varepsilon}.
\end{equation}
Without loss of generality we can assume that $\rho >0$ was chosen so small that for every $i=1$, $\ldots$, $K$,  for every $x$ and every $y\in B(x,\rho)$ we have 
\begin{equation}\label{phi2rel}
  \lvert \varphi_i(x)-\varphi_i(y)\rvert \le \varepsilon.
\end{equation}
Besides the Lyapunov regular cover~\eqref{regcov}, let us consider a finite partition $\cP$ of $\Lambda$ of diameter $<\rho/4$. Notice that each partition element $\cP(x_i)$, $i=1$, $\ldots$, $j$, satisfies $\cP(x_i)\subset B(x_i,\rho/2)$.

We also want those points in $E$ with distinguishable orbits, or at least most of them, in addition to be closely recurring to itself at the same, or at least  almost the same, time.
By Lemma~\ref{lem:bur} a), there exists a number $n_2=n_2(\varepsilon,\delta)$ and a compact set $\hLambda_2=\hLambda_2(\varepsilon,\delta)\subset\hN$ of points of measure $>1-\delta$  such that for every $n\ge n_2$ and  for every $\hx\in \hLambda_2$ we have
\begin{equation}\label{rec}
f^k(\pi\hx)=\pi\hf^k(\hx) \in\cP(\pi\hx)\quad\text{ and }\quad \hf^k(\hx)\in\hLambda_1 
\end{equation} 
for some $k\in[n,n+n\varepsilon]$. We choose a number $n_3=n_3(\varepsilon,\chi)$ satisfying
\begin{equation}\label{n3def}
n_3\ge \frac{\log4}{\chi-2\varepsilon}.
\end{equation}

Choose $\widetilde\varepsilon\in(0,\min\{\widetilde\varepsilon_0,\rho/2\})$ and notice that any $(n,\rho/2)$-separated set is also $(n,\widetilde\varepsilon)$-separated.  Let $n_0=n_0(\widetilde\varepsilon,\varepsilon,\delta)$. 

We take
\begin{equation}\label{nchoi}
n\ge\max\{n_0,n_1,n_2,n_3\}
\end{equation} 
and consider the set \[\hLambda_\ast\eqdef\hLambda_1\cap\hLambda_2.\] 
Let $\Lambda_\ast=\pi\hLambda_\ast$. 
Notice that   $\hmu(\hLambda_\ast)\ge 1-2\delta$ and hence $\mu(\Lambda_\ast)>1-2\delta$. 

We now choose an $(n,\rho/2)$-separated set  $E\subset \Lambda_\ast$ that is of maximal  cardinality. 
Note that then $E$ is also $(n,\rho/2)$-spanning and hence $\bigcup_{x\in E}B_n(x,\rho/2)$ covers the set $\Lambda_\ast$ that has measure $>1-2\delta$.

Remembering~\eqref{rec}, we now partition the set $E$ into sets $F_k$, $n\le k< n+\varepsilon n$, defined by
\[
F_k\eqdef \left\{x\in E\colon f ^{k}(x)\in\cP(x) \right\}, 
\]
that is, having the same time $k$ of return to their partition element.  Let $m$ be the index satisfying  ${\rm card}\, F_m = \max_{n\le k<n+\varepsilon n}{\rm card}\, F_k$. 
Since ${\rm card}\, E= \sum_{n\le k< n+\varepsilon n}{\rm card}\, F_k$, we have $\varepsilon n\, {\rm  card}\,F_m\ge{\rm card}\,E$. With $\varepsilon n < e^{n\varepsilon}$ and~\eqref{spaet} we obtain
\[
{\rm  card}\,F_m \ge \frac{{\rm  card}\,E}{\varepsilon n}\ge e^{n(h_\mu(f )-2\varepsilon)}.
\]

In the following we will consider only the ball $B(x_i,\rho/2)$ from the cover~\eqref{regcov} for that ${\rm card}\, (F_m\cap \cP(x_i))$ is maximal. Hence we have 
\begin{equation}\label{docher}
{\rm card}\, (F_m\cap \cP(x_i))
\ge \frac{1 }{j} {\rm card}\, F_m
\ge  \frac{1}{j} e^{n(h_\mu(f )-2\varepsilon)}.
\end{equation}
Recall that exactly after $m$ iterations each point $x\in F_m\cap \cP(x_i)$ returns to $\cP(x_i)$, and hence to $B(x_i,\rho/2)$. 
Recall that to $x$ there is associated a backward branch $\hx$ with $\pi\hx=x$ and by~\eqref{rec} we have $\hf^m(\hx)\in \hLambda_1$. Given $x\in F_m\cap \cP(x_i)$, let 
\[
U_x\eqdef f^{-m}_x\left( B(x_i,\rho/2)\right).
\]
Notice that $f^m(x)\in B(x_i,\rho/2)\subset B(f^m(x),\rho)$.
Hence the uniform hyperbolic estimates in~\eqref{thomas} and ~\eqref{thomasb}  apply, and we can conclude that for every $y\in B(x_i,\rho/2)$ and each $0\le k\le m-1$ we have $\lVert df^{-k}_x(y)\rVert \le e^{-k(\chi-2\varepsilon)}$
	and
\[
	\diam \, U_x=\diam \, f^{-m}_x(B(x_i,\rho/2)) <	
	e^{-k(\chi-2\varepsilon)} \rho.
\]
This implies together with~\eqref{n3def}
\begin{equation}\label{heisss}
	\diam \,U_x \le e^{-m(\chi-2\varepsilon)}\rho \le e^{-n(\chi-2\varepsilon)}\rho
	\le e^{-n_3(\chi-2\varepsilon)}\rho \le \frac{1}{4}\rho
\end{equation}
and hence $U_x\subset B(x,\frac{1}{4}\rho)$ and
\[
\overline {U_x} 
= \overline{f^{-m}_{x}\left( B(x_i,\rho/2)\right)} \subset B(x_i,\rho).
\] 
For every two distinct points $x$, $y$ in the $(n,\rho/2)$-separated set $F_m$
in particular $d(x,y)\ge \rho/2$ and hence~\eqref{heisss} implies $\overline {U_x}\cap \overline {U_y} =\emptyset$.
We observe that
\[
R_{\varepsilon,0}\eqdef \overline{B(x_i,\rho/2)}, \quad
R_{\varepsilon,\ell+1}\eqdef \bigcup_{x\in F_m}
         f_{x}^{-m}(R_{\varepsilon,\ell})\quad\text{ for }\ell\ge 0
\] 
form a family of nested non-empty compact sets, and hence define a
non-empty compact set
\begin{equation}\label{Kelldef}
R_\varepsilon \eqdef
\bigcap_{\ell\ge 1} R_{\varepsilon,\ell}
\end{equation}
that is, by construction, $f^m$-invariant. By~\eqref{thomas}  we have for
every $y\in R_\varepsilon$ 
\begin{equation}\label{Kexp}
\lVert df^m(y)(v)\rVert \ge e^{m(\chi-2\varepsilon)}\lVert v\rVert .
\end{equation}
Hence  $f^m|_{ R_\varepsilon}$ is uniformly expanding. Moreover, $f^m|_{R_\varepsilon}$ is topologically conjugate to the one-sided full shift on an alphabet with $\card F_m$ 
symbols. This implies $h_{\rm top}(f^m|_{R_\varepsilon}) = \card F_m$, 
which implies for the set $Q_\varepsilon \eqdef R_\varepsilon\cup f(R_\varepsilon)\cup\ldots\cup f^{m-1}(R_\varepsilon)$
\[
h_{\rm top}(f|_{Q_\varepsilon}) = \frac{1}{m}\log\card F_m.
\]
Using $(1+\varepsilon)n>m\ge n$,~\eqref{nchoi},~\eqref{docher}, and~\eqref{refer}, we now obtain
\[\begin{split}
h_{\rm top}(f|_{Q_\varepsilon}) 
 &\ge  \frac{1}{m}\log\frac{1}{j} + \frac n m (h_\mu(f)-2\varepsilon)\\
 &>  -\varepsilon + \frac{1}{1+\varepsilon}(h_\mu(f) - 2\varepsilon)\\
 &\ge h_\mu(f)-3\varepsilon .
 \end{split}\]
 This proves property (a).
From the construction we obtain that $f|_{Q_\varepsilon}$ is a uniformly expanding repeller that satisfies (d). 
 Because of Lemma~\ref{lem:bur} b) and \eqref{phi2rel} for every $i=1$, $\ldots$, $K$ and every $x\in Q_\varepsilon$ we have
 \begin{equation}\label{cellul}
 \lim_{n\to\infty}\left\lvert 
 \frac{1}{n}\left(\varphi_i(x)+\varphi_i(f(x))+\cdots+\varphi_i(f^{n-1}(x))\right)
 -\int\varphi_i\,d\mu\right\rvert
 \le 2\varepsilon
 \end{equation}
 and hence property (c). 
 In particular, every $f^m$-invariant ergodic measure $\nu$ supported on $R_\varepsilon$ satisfies
  \begin{equation}\label{cellulw}
\left\lvert 
 \frac 1 m\int S_m\varphi_i\,d\nu -\int\varphi_i\,d\mu\right\rvert
 \le 2\varepsilon.
 \end{equation}
Let $\nu$ be an $f^m$-invariant ergodic measure supported on $R_\varepsilon$ that has maximal entropy $h_{\rm top}(f^m|_{R_\varepsilon})=h_\nu(f^m)$.   
The variational principle for pressure and~\eqref{cellulw} together imply for every $i=1$, $\ldots$, $K$ 
\[
\frac 1 m P_{\rm top}(f^m|_{R_\varepsilon},S_m\varphi_i) 
\ge  h_\nu(f)+\frac 1 m \int S_m\varphi_i\,d\nu - 3\varepsilon  
\ge h_\mu(f) +\int\varphi_i\,d\mu - 5\varepsilon.
\]
Since, by~\cite[Theorem 9.8]{Wal:82} we have 
\[
mP_{\rm top}(f|_{Q_\varepsilon},\varphi_i) 
= P_{\rm top}(f^m|_{Q_\varepsilon},S_m\varphi_i) 
\ge P_{\rm top}(f^m|_{R_\varepsilon},S_m\varphi_i),
\] 
we also have shown property (b).
This finishes the proof of the theorem.


\end{document}